\newtheorem{theor}{Theorem}
\newtheorem{prop}{Proposition}
\newtheorem{cor}{Corollary}
\newtheorem{lemma}{Lemma}
\theoremstyle{definition}
\newtheorem{example}{Example}
\newtheorem{define}{Definition}
\theoremstyle{remark}
\newtheorem{rem}{Remark}
\newcommand{\pinner}{\mathbin{\mathchoice
   {\hbox{\vrule width0.6em depth0pt height0.4pt
   \vrule width0.4pt depth0pt height0.8ex}}
   {\hbox{\vrule width0.6em depth0pt height0.4pt
   \vrule width0.4pt depth0pt height0.8ex}}
   {\hbox{\kern0.14em
   \vrule width0.48em depth0pt height0.4pt
   \vrule width0.4pt depth0pt height0.6ex\kern0.14em}}
   {\hbox{\kern0.1em
   \vrule width0.39em depth0pt height0.4pt
   \vrule width0.4pt depth0pt height0.5ex\kern0.1em}}}}
\newcommand{\what}{\widehat}
\DeclareMathOperator{\img}{im}
\DeclareMathOperator{\id}{id}
\DeclareMathOperator{\Ann}{Ann}
\DeclareMathOperator{\D}{D}
\DeclareMathOperator{\ad}{ad}
\DeclareMathOperator{\Mor}{Mor}
\DeclareMathOperator{\Hom}{Hom}
\DeclareMathOperator{\CDiff}{\mathcal{C}Diff}
\DeclareMathOperator{\Alt}{Alt}
\DeclareMathOperator{\const}{const}
\DeclareMathOperator{\dvol}{dvol}
\newcommand{\ov}{\overline}
\newcommand{\bbf}{\boldsymbol{f}}
\newcommand{\cE}{\mathcal{E}}
\newcommand{\cEinf}{\mathcal{E}^{\infty}}
\newcommand{\MC}{\text{\textup{MC}}}
\newcommand{\cH}{\mathcal{H}}
\newcommand{\cL}{\mathcal{L}}
\newcommand{\cC}{\mathcal{C}}
\newcommand{\cF}{\mathcal{F}}
\newcommand{\cX}{{\EuScript X}}
\newcommand{\cY}{{\EuScript Y}}
\newcommand{\cZ}{{\EuScript Z}}
\newcommand{\veps}{\varepsilon}
\newcommand{\BBR}{\mathbb{R}}
\newcommand{\BBZ}{\mathbb{Z}}
\newcommand{\bp}{\boldsymbol{p}}
\newcommand{\dd}{\partial}
\newcommand{\revd}{\vec{\dd}} 
\newcommand{\rdelta}{\vec{\delta}} 
\newcommand{\Id}{{\mathrm d}}
\newcommand{\fg}{\mathfrak{g}}
\newcommand{\bunit}{\boldsymbol{1}}
\newcommand{\by}[1]{\textit{{#1}}}
\newcommand{\jour}[1]{\textit{{#1}}}
\newcommand{\vol}[1]{\textbf{{#1}}}
\newcommand{\book}[1]{\textrm{{#1}}}
\newcommand{\lshad}{[\![}
\newcommand{\rshad}{]\!]}
\title{Non\/-\/Abelian Lie algebroids over jet spaces}
\author[A.~V.~Kiselev]{A.~V.~Kiselev${}^{\S}$}
\thanks{${}^{\S}$
  \textit{Address}:
  Johann Bernoulli Institute for Mathematics and Computer Science,
  University of Groningen,
  P.O.Box~407, 9700\,AK Groningen, The Netherlands.\quad
  \textit{E-mail}: \texttt{A.V.Kiselev\symbol{"40}rug.nl}%
}
\author[A.~O.~Krutov]{A.~O.~Krutov$^{\dag}$}  
\thanks{${}^{\dag}$%
  \textit{Address}: %
  Department of Higher Mathematics, Ivanovo State Power
  University, Rabfa\-kov\-skaya str.~34, Ivanovo, 153003 Russia.
\quad \textit{E-mail}:
\texttt{krutov\symbol{"40}math.ispu.ru}
}
\date{November 13, 2013, revised January 6, 2014}
\subjclass[2010]{
  37K10, 
  81T70, 
also
  53D17, 
  58A20, 
  70S15, 
  81T13. 
}
\keywords{Zero\/-\/curvature representation, gauge transformation, Lie algebroid, homological vector field, master equation}
\begin{document}

\dedicatory{This submission to proceedings of the jubilee workshop `Nonlinear
Mathematical Physics\textup{:}\\ Twenty Years of JNMP' 
\textup{(}June \textup{4\/--\/14, 2013;} Sophus Lie Centre, Nordfj\o rdeid, 
Norway\textup{)}\\ is a tribute to the legacy of great Norwegian 
mathematicians\textup{:} Niels 
Abel and Sophus Lie.}

\begin{abstract}
We associate Hamiltonian homological evolutionary vector fields 
--\,which are the non\/-\/Abelian variational Lie algebroids' differentials\,--
with Lie algebra\/-\/valued zero\/-\/curvature representations for partial differential equations. 
\end{abstract}

\maketitle
\thispagestyle{empty}\enlargethispage{\baselineskip}
\subsection*{Introduction}
Lie algebra\/-\/valued zero\/-\/curvature representations for partial differential equations (PDE) are the input data for solving Cauchy's problems by the inverse scattering method~\cite{ZSh}. 
For a system of PDE with 
unknowns in two independent variables 
to be kinematically integrable, a zero\/-\/curvature representation at hand must 
depend on a spectral parameter which is non\/-\/removable under gauge transformations. 
In the paper~\cite{Marvan2002} M.~Mar\-van developed a remarkable method for inspection 
whether a parameter in a given zero\/-\/curvature representation~$\alpha$ is 
(non)\/removable; this technique refers to a cohomology theory generated by 
a differential~$\boldsymbol{\partial}_{\alpha}$, 
which was explicitly constructed for every~$\alpha$.  

In this paper we show that zero\/-\/curvature representations for PDE give rise to a 
natural class of non\/-\/Abelian variational Lie algebroids. 
In section~\ref{secPre} (see Fig.\,\ref{FigNonAbelAlgd} 
on p.\,\pageref{FigNonAbelAlgd}) we list all the components of such structures 
(cf.~\cite{KiselevTMPh2011}); in particular, we show that Mar\-van's operator~$
\boldsymbol{\partial}_{\alpha}$ is the anchor. 
In section~\ref{secNonAbelian}, non\/-\/Abelian variational Lie algebroids are 
realized via BRST\/-\/like homological evolutionary vector fields~$Q$ on 
superbundles \`a\ la~\cite{BRST}.
Having enlarged the BRST-\/type setup to a geometry which goes in a complete parallel 
with the standard BV-\/zoo~(\cite{BV1981-83}, see also~\cite{AKZS}), 
in section~\ref{secMaster} we extend the vector field~$Q$ to the evolutionary 
derivation $\what{Q}(\cdot) \cong \lshad \what{S}, \,\cdot \rshad$ whose Hamiltonian 
functional~$\what{S}$ satisfies the classical master\/-\/equation 
$\lshad \what{S}, \what{S} \rshad = 0$. We then address that equation's gauge symmetry 
invariance and 
$\what{Q}$-\/cohomology automorphisms (\cite{KontsevichSoibelman},
cf.\,\cite{FelderKazhdanCME12} and~\cite{KKIgonin2003}),
which yields the next generation of Lie algebroids,
see Fig.\,\ref{FigReiterate} on p.\,\pageref{FigReiterate}.

Two appendices follow the main exposition. We first recall the notion of Lie algebroids over usual smooth manifolds. (Appendix~\ref{AppLieAlgd} concludes with an elementary explanation why the classical construction stops working over infinite jet spaces or over~PDE such as gauge systems.) Secondly, we describe the idea of parity\/-\/odd neighbours to vector spaces and their use in $\BBZ_2$-\/graded superbundles~\cite{Voronov2002}. In particular, we recall how Lie algebroids or Lie algebroid differentials are realised in terms of homological vector fields on the total spaces of such superbundles~\cite{Vaintrob}.

In the earlier work~\cite{KiselevTMPh2011} by the first author and J.\,W.\,van de~Leur,
classical notions, operations, and reasonings which are contained in both appendices were upgraded from ordinary manifolds to jet bundles, which are endowed with their own, 
restrictive geometric structures such as the Cartan connection~$\nabla_{\mathcal{C}}$
and which harbour systems of~PDE. We prove now that the geometry of Lie algebra\/-\/valued connection $\fg$-\/forms~$\alpha$ satisfying zero\/-\/curvature equation~\eqref{eqMC} gives rise to the geometry of solutions~$\widehat{S}$ for the classical master\/-\/equation
\begin{align}
\cE_{\text{CME}}&=\bigl\{
\boldsymbol{i}\hbar\,\smash{\Delta\widehat{S}{\bigr|}_{\hbar=0}} = 
\tfrac{1}{2}\lshad\widehat{S},\widehat{S}\rshad
\bigr\},\label{EqCMEIntro}\\
\intertext{see Theorem~\ref{Th34} on p.~\pageref{Th34} below. 
It is readily seen that realization~\eqref{EqCMEIntro} of the gauge\/-\/invariant setup
is the classical limit of the full quantum picture as $\hbar\to0$;
the objective of quantization $\widehat{S}\longmapsto S^\hbar$ 
is a solution of the quantum master\/-\/equation}
\cE_{\text{QME}}&=\bigl\{
\boldsymbol{i}\hbar\,\Delta S^\hbar =
\smash{\tfrac{1}{2}}\lshad S^\hbar, S^\hbar\rshad
\bigr\} \label{QME}
\end{align}
for the true action functional~$S^\hbar$ at $\hbar\neq0$. Its construction involves quantum,
noncommutative objects such as the deformations~$\fg_\hbar$ of Lie algebras together with deformations of their duals (cf.~\cite{DrinfeldICM86}).
(In fact, we express 
the notion
of non\/-\/Abelian variational Lie algebroids in terms of the homological
evolutionary vector field~$\what{Q}$ and classical
master\/-\/equation~\eqref{EqCMEIntro} viewing this construction as an
intermediate step 
towards quantization.)
A transition from the semiclassical to quantum picture results in 
$\fg_\hbar$-\/va\-lued connections, quantum gauge groups, quantum vector spaces 
for values of the wa\-ve functions in auxiliary linear problems~\eqref{EqAuxLinPrb}, 
and quantum extensions of physical fields.\footnote{Lie algebra\/-\/valued
connection one\/-\/forms are the main objects in classical gauge field theories.
Such physical models are called \emph{Abelian} --\,e.g., 
Maxwell's electrodynamics\,--
or \emph{non\/-\/Abelian} --\,here, consider the Yang\/--\/Mills theories with structure
Lie groups $SU(2)$ or~$SU(3)$\,-- according to the commutation table for the underlying
Lie algebra. This is why we say that variational Lie algebroids are 
(\emph{non}-)\/\emph{Abelian}--- referring to the Lie algebra\/-\/valued connection
one\/-\/forms~$\alpha$ in the geometry of gauge\/-\/invariant zero\/-\/curvature 
representations for~PDE.}
 

\section{Preliminaries}\label{secPre}
\noindent%
Let us first briefly recall some definitions 
(see~\cite{BVV,GDE2012,Olver} and~\cite{Marvan2002} for detail);
this material is standard so that we now fix the notation.

\enlargethispage{0.7\baselineskip}
\subsection{The geometry of infinite jet space $J^\infty(\pi)$}
Let $M^n$~be a smooth real $n$-\/dimensional orientable
manifold. Consider a smooth 
vector bundle
$\pi\colon E^{n+m} \to M^n$ with $m$-\/dimensional fibres and construct the 
space~$J^{\infty}(\pi)$ of infinite jets of sections for~$\pi$. 
A convenient organization of local coordinates is as follows: 
let $x^i$~be some coordinate system on a chart in the base~$M^n$ and 
denote by~$u^j$ the coordinates 
along a 
fibre of the bundle~$\pi$ so that the variables~$u^j$ play the r\^o\-le of unknowns;
one obtains the collection $u^j_\sigma$ of jet variables along fibres of the vector bundle $J^{\infty}(\pi)\to M^n$ 
(here $|\sigma|\geqslant0$ and~$u^j_\varnothing\equiv u^j$). In this setup, 
the \emph{total derivatives} $D_{x^i}$~are commuting vector fields
$D_{x^i}= \nabla_{\mathcal{C}}(\dd/\dd x^i)= \partial / \partial x^i +
\sum_{j,\sigma}u^j_{\sigma i}\,\partial / \partial u^j_\sigma$
on~$J^\infty(\pi)$.

Consider a system of partial differential equations
\[
\cE = \left\{ F^\ell (x^i, u^j, \dots, u^j_\sigma, \dots) = 0, \quad \ell =   1,\dots, r<\infty \right\};
\]
without any loss of generality for applications we assume that the system at hand satisfies mild assumptions
which are outlined in~\cite{GDE2012,Olver}. Then the system~$\cE$ and all its differential consequences $D_{\sigma}(F^\ell) =
0$ (thus presumed existing, regular, and not leading to any contradiction in the course of derivation) generate the infinite prolongation~$\cEinf$ of the system~$\cE$.

Let us denote by~$\bar{D}_{x^i}$ the restrictions of total derivatives~$D_{x^i}$ to~$\cEinf\subseteq J^\infty(\pi)$. 
We recall that the vector fields~$\bar{D}_{x^i}$ span the Cartan distribution~$\cC$ in the tangent space~$T\cEinf$.
At every point $\theta^\infty\in\cEinf$ the tangent 
space~$T_{\theta^\infty}\cEinf$ splits in a direct sum of two subspaces. The one which is spanned by the Cartan distribution~$\cEinf$ is \emph{horizontal} and the other is \emph{vertical}:
$T_{\theta^\infty} \cEinf = \cC_{\theta^\infty} \oplus
V_{\theta^\infty} \cEinf$. 
We denote by~$\Lambda^{1,0}(\cEinf) = \Ann \cC$ 
and~$\Lambda^{0,1} (\cEinf) = \Ann V\cEinf$ 
the $C^\infty(\cEinf)$-\/modules of contact and horizontal
one\/-\/forms which vanish on~$\cC$ and~$V\cEinf$, respectively.
Denote further by~$\Lambda^r(\cEinf)$ the 
$C^{\infty}(\cEinf)$-\/module of $r$-forms on~$\cEinf$.
There is a natural decomposition $\Lambda^r(\cEinf) = \bigoplus_{q+p = r}
\Lambda^{p,q}(\cEinf)$, where $\Lambda^{p,q} (\cEinf) = \bigwedge^p \Lambda^{1,0}(\cEinf) \wedge \bigwedge^q \Lambda^{0,1}(\cEinf)$. This implies that the de Rham differential~$\bar{\Id}$ on~$\cEinf$ is subjected to the decomposition $\bar{\Id} = \bar{\Id}_h + \bar{\Id}_{\cC}$, where $\bar{\Id}_h \colon \Lambda^{p,q}(\cEinf) \to \Lambda^{p,q+1}(\cEinf)$ is the horizontal differential and $\bar{\Id}_{\cC} \colon \Lambda^{p,q}(\cEinf) \to \Lambda^{p+1,q}(\cEinf)$ is the vertical differential.
In local coordinates, the differential~$\bar{\Id}_h$ acts by the rule
\[
\bar{\Id}_h = \sum\nolimits_i \Id x^i \wedge \bar{D}_{x^i}.
\]
We shall use this formula in what follows.
By definition, we put $\bar{\Lambda}(\cE^\infty)=\bigoplus_{q\geqslant0}\Lambda^{0,q}
(\cE^\infty)$ and we denote by $\overline{H}^n(\cdot)$ the senior $\Id_h$-\/cohomology
groups (also called senior \emph{horizontal cohomology}) for the infinite jet bundles
which are indicated in parentheses, cf.~\cite{gvbv}.

\begin{rem}\label{RemNoExcess}
The geometry which we analyse in this paper is produced and arranged by using the pull-backs $f^*(\varrho)$ of fibre 
bundles $\varrho$ under some mappings $f$. Typically, the fibres of $\varrho$ are Lie algebra-valued horizontal 
differential forms coming from $\Lambda^*(M^n)$, or similar objects%
\footnote{%
Let us specify at once that the geometries of prototype fibres in the bundles under study are described by $\mathfrak{g}$-,
$\mathfrak{g}^*$-, $\Pi\mathfrak{g}$-, or $\Pi\mathfrak{g}^*$-valued $(-1)$-, zero-, one-, two-, and three-forms\,;
the degree $-1$ corresponds to the module $D_1(M^n)$ of vector fields.}\,;
in turn, the mappings $f$ are projections to the base $M^n$ of some infinite jet bundles. We employ the standard notion of
\emph{horizontal infinite jet bundles} such as $\overline{J^{\infty}_{\xi}}(\chi)$ or $\overline{J^{\infty}_{\chi}}(\xi)$
over infinite jet bundles $J^{\infty}(\xi)$ and $J^{\infty}(\chi)$, respectively\,; these spaces are present in
Fig.~\ref{FigNonAbelAlgd} on p.~\pageref{FigNonAbelAlgd} and they occur in (the proof of) 
Theorems~\ref{propNonAbelianQ} and~\ref{Th34}
below. A proof of the convenient isomorphism 
$\overline{J^{\infty}_{\xi}}(\chi)\cong J^{\infty}(\xi\mathbin{\times_{M^n}}\chi)=J^{\infty}(\xi)\mathbin{\times_{M^n}}J^{\infty}(\chi)$
is written in~\cite{RingersProtaras}, see also references therein. However, we recall further that, strictly speaking,
the entire picture --~with fibres which are inhabited by form-valued parity-even or parity-odd (duals of the) Lie algebra
$\mathfrak{g}$~-- itself is the image of a pull-back under the projection
$\pi_{\infty}\colon J^{\infty}(\pi)\to M^n$ in the infinite jet bundle over the bundle $\pi$ of physical fields. In other
words, \emph{sections} of those induced bundles are elements of Lie algebra etc., but all coefficients are differential
functions in configurations of physical fields (which is obvious, e.\,g., from~\eqref{eqMC} in Definition~\ref{DefZCR}
on the next page). Fortunately, it is the composite geometry of a fibre but not its location over the composite-structure
base manifold which plays the main r\^ole in proofs of Theorems~\ref{propNonAbelianQ} and~\ref{Th34}.

It is clear now that an attempt to indicate not only the bundles $\xi$ or $\chi,\ \Pi\chi^*,\ \Pi\xi$, and $\xi^*$ which
determine the intrinsic properties of objects but also to display the bundles that generate the pull-backs would make all
proofs sound like the well-known poem about the house which Jack built.

Therefore, we \emph{denote} the objects such as $p_i$ or $\alpha$ and their mappings (see p.~\pageref{pProofTh1} or
p.~\pageref{pProofTh2}) \emph{as if} they were just sections, $p_i\in\Gamma(\xi)$ and $\alpha\in\Gamma(\chi)$, of the 
bundles $\xi$ and $\chi$ over the base $M^n$, leaving obvious technical details to the reader.
\end{rem}

\subsection{Zero\/-\/curvature representations}
Let $\fg$~be a finite\/-\/dimensional (complex) Lie algebra.
Consider its tensor product (over~$\BBR$) 
with the exterior algebra of horizontal differential
forms~$\bar{\Lambda}(\cEinf)$ on the infinite prolongation of~$\cE$.
This product is endowed with a $\BBZ$-\/graded Lie algebra structure
by the bracket $[A\mu, B\nu] = [A,B]\,\mu\wedge\nu$, where
$\mu,\nu\in\bar{\Lambda}(\cEinf)$ and~$A,B\in\fg$.

Let us focus on the case of $\fg$-valued one\/-\/forms. In the tensor
product, the Jacobi identity for $\alpha$,\ $\beta$,\ $\gamma \in \fg\otimes 
\Lambda^{0,1}(\cEinf)$ looks as follows. Let $\alpha = A \mu$, $\beta =
B\nu$, $\gamma = C \omega $. We obtain that
\begin{align*}
  [\alpha, [\beta, \gamma]] + {}& [\gamma, [\alpha, \beta]] + [\beta,
  [\gamma, \alpha]] \\
{}&{}= 
  [A\mu, [B, C]\, \nu\wedge\omega] + [C\omega, [A,B]\,\mu\wedge\nu] +
  [B\nu, [C, A]\, \omega\wedge\mu] \\
{}&{}  = [A, [B,C]]\,\mu\wedge\nu\wedge\omega + [C,[A,B]]\,\omega\wedge\mu\wedge\nu
  + [B, [C, A]\, \nu\wedge\omega\wedge\mu.
\intertext{For the one\/-\/forms $\mu$,\ $\nu$,\ and~$\gamma$ we have that
$\mu\wedge\nu\wedge\gamma = \gamma\wedge\mu\wedge\nu =
\nu\wedge\gamma\wedge\mu$ so that the above equality continues with}
{}&{}  = \bigl([A, [B, C]] + [C, [A, B]] + [B, [C, A]]\bigr)\,\mu\wedge\nu\wedge\omega
  = 0.
\end{align*}
Indeed, this expression vanishes due to the Jacobi identity  of the
Lie algebra~$\fg$, namely, $[A, [B, C]] + [C, [A,B]] + [B, [C, A]]=0$.

The horizontal differential~$\Id_h$ acts on elements of 
$A\otimes\mu\in\fg\otimes\Lambda(\cEinf)$ as follows: 
\[
\Id_h(A\otimes\mu) = A \otimes\Id_h \mu.
\]

\begin{define}\label{DefZCR}
A horizontal one\/-\/form $\alpha\in\fg\otimes\Lambda^{0,1}(\cEinf)$ is called a $\fg$-\/valued zero\/-\/curvature representation for~$\cE$ if $\alpha$~satisfies the Maurer\/--\/Cartan equation
\begin{equation}\label{eqMC}
\cE_{\text{MC}} = \bigl\{ \bar{\Id}_h \alpha - \tfrac12 [\alpha, \alpha]  
\doteq 0\bigr\}
\end{equation}
by virtue of equation~$\cE$ and its differential consequences.
\end{define}

Given a zero\/-\/curvature representation $\alpha = A_i\,\Id x^i$, 
the Maurer\/--\/Cartan equation~$\cE_{\text{MC}}$ can be interpreted as the compatibility condition for the linear system
\begin{equation}\label{EqAuxLinPrb}
\Psi_{x^i} = A_i \Psi,
\end{equation}
where $A_i\in\fg\otimes C^{\infty}(\cEinf)$ and $\Psi$~is the wave function, that is, $\Psi$~is a (local) section of the principal fibre bundle~$P(\cEinf,G)$ with action of the gauge Lie group~$G$ on fibres; the Lie algebra of~$G$ is~$\fg$.
Then the system of equations
\[
D_{x^i} A_j - D_{x^j} A_i + [A_i, A_j] = 0,\qquad 1\leqslant i < j\leqslant n,
\]
is equivalent to Maurer\/--\/Cartan's equation~\eqref{eqMC}.

\subsection{Gauge transformations}\label{subsecGauge}
Let $\fg$~be the Lie algebra of the Lie group~$G$ and
$\alpha$~be a $\fg$-\/valued zero\/-\/curvature representation
for a given PDE system~$\cE$.
A gauge transformation $\Psi\mapsto g\Psi$ of the wave function by an element~$g
\in C^\infty(\cEinf,G)$ induces the change 
\[
\alpha\mapsto
\alpha^g = g\cdot \alpha \cdot g^{-1} + \bar{\Id}_h g\cdot g^{-1}.
\]
The zero\/-\/curvature representation~$\alpha^{g}$ is called
\emph{gauge equivalent} to the initially given~$\alpha$;
the $G$-\/valued function~$g$ on~$\cEinf$ determines the 
\emph{gauge transformation} of~$\alpha$. For convenience, 
we make no distinction between the gauge transformations $\alpha\mapsto
\alpha^g$ and $G$-\/valued functions~$g$ which generate them.

It is readily seen that a composition of two gauge transformations, 
by using $g_1$ first and then by~$g_2$, 
itself is a gauge transformation generated by the $G$-\/valued function~$g_2\circ g_1$. Indeed, we have that
\begin{multline*}
(\alpha^{g_1})^{g_2} = (\bar{\Id}_h g_1 \cdot g_1^{-1} + g_1 \cdot
\alpha \cdot g_1^{-1})^{g_2} = \bar{\Id}_h g_2 \cdot g_2^{-1} + g_2 \cdot (\bar{\Id}_h
g_1 \cdot g_1^{-1} +  g_1 \cdot \alpha \cdot g_1^{-1})\cdot g_2^{-1} \\
= (\bar{\Id}_h g_2 \cdot g_1 + g_2 \cdot \bar{\Id}_h g_1) \cdot g_1^{-1}
\cdot g_2^{-1} + g_2\cdot g_1\cdot \alpha \cdot g_1^{-1} \cdot
g_{2}^{-1} \\
= \bar{\Id}_h (g_2\cdot g_1) \cdot (g_2\cdot g_1)^{-1} + 
 (g_2\cdot g_1) \cdot \alpha \cdot (g_2\cdot g_1)^{-1}.
\end{multline*}
We now consider \emph{infinitesimal} gauge transformations generated by elements 
of the Lie group~$G$ which are close to its unit element~$\bunit$.
Suppose that $g_1 = \exp(\lambda p_1)=
\bunit + \lambda p_1 + \tfrac{1}{2}\lambda^2 p_1^2+o(\lambda^2)$ and 
$g_2 = \exp(\mu p_2)= \bunit + \mu p_2 + \tfrac{1}{2}\mu^2 p_2^2+o(\mu^2)$ for 
some~$p_1$,\ $p_2\in \fg$ and $\mu$,\ $\lambda\in\BBR$. 
The following lemma, an elementary proof of which refers to the definition of Lie algebra,
is the key to a construction of the anchors in non\/-\/Abelian variational Lie 
algebroids.

\begin{lemma}\label{lemmaGaugeInfCommute}
Let $\alpha$~be a $\fg$-\/valued zero\/-\/curvature representation for a system~$\cE$.
Then the commutant $g_1\circ g_2 \circ g^{-1}_1 \circ g_2^{-1}$ of infinitesimal gauge transformations~$g_1$ and~$g_2$ is 
an infinitesimal gauge transformation again.
\end{lemma}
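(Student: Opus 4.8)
The plan is to reduce the statement to the composition law for gauge transformations that was verified immediately above, and then to an elementary computation in the group $C^{\infty}(\cEinf,G)$. Recall that applying the gauge transformation $g_1$ first and then $g_2$ produces $\alpha^{g_2\cdot g_1}$, so the assignment sending a $G$-valued function to the gauge transformation it generates turns composition of transformations into multiplication of the generating $G$-valued functions. Consequently the commutant $g_1\circ g_2\circ g_1^{-1}\circ g_2^{-1}$ is precisely the gauge transformation generated by the group-theoretic commutator $g:=g_1\cdot g_2\cdot g_1^{-1}\cdot g_2^{-1}$ (the harmless reversal of order merely flips the sign of the leading generator). It therefore suffices to show that $g$ is again exponential of a small $\fg$-valued element, i.e.\ that it is the generator of an infinitesimal gauge transformation.

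First I would substitute $g_1=\exp(\lambda p_1)$ and $g_2=\exp(\mu p_2)$ into $g=g_1\cdot g_2\cdot g_1^{-1}\cdot g_2^{-1}$ and expand in powers of $\lambda$ and $\mu$ using the series for $g_i$ and $g_i^{-1}$ recorded above. The pure first-order contributions $\lambda p_1$ and $\mu p_2$ cancel against those coming from $g_1^{-1}$ and $g_2^{-1}$, so that $g=\bunit+\lambda\mu\,[p_1,p_2]+o(\lambda\mu)$; equivalently $g=\exp\bigl(\lambda\mu\,[p_1,p_2]+\ldots\bigr)$ by the Baker--Campbell--Hausdorff formula. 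Since $\fg$ is closed under the Lie bracket, the generator $[p_1,p_2]$ again lies in $\fg$, whence $g$ is exponential of a $\fg$-valued element close to $\bunit$. Thus the commutant is the infinitesimal gauge transformation generated by $p_3:=\lambda\mu\,[p_1,p_2]+\ldots\in\fg$, which is exactly the claim; this is the step that "refers to the definition of Lie algebra," namely the closure of $\fg$ under $[\cdot,\cdot]$.

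The computation carries no analytic difficulty, and the only thing to do carefully is the bookkeeping of orders in $\lambda,\mu$, so as to confirm the cancellation of the lower-order terms and to identify $[p_1,p_2]$ correctly as the leading generator. I would stress two points. First, because the composition law $(\alpha^{g_1})^{g_2}=\alpha^{g_2\cdot g_1}$ was established for \emph{arbitrary} $G$-valued functions, no separate argument is needed to handle the inhomogeneous term $\bar{\Id}_h g\cdot g^{-1}$, nor the case in which $p_1,p_2$ are $\fg$-valued differential functions on $\cEinf$ rather than constants: its interaction with the group multiplication is already encoded in the reduction to $g=g_1\cdot g_2\cdot g_1^{-1}\cdot g_2^{-1}$. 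Second, the content that will actually be used later is the concrete identification of the generator, namely that the commutator of the infinitesimal gauge transformations by $p_1$ and $p_2$ is, to leading order, the infinitesimal gauge transformation by $[p_1,p_2]$; this is what makes the infinitesimal gauge transformations close into a Lie algebra and thereby furnishes the anchor of the non-Abelian variational Lie algebroid.
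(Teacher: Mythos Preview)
Your proposal is correct and follows essentially the same route as the paper: reduce the commutant of gauge transformations to the group\/-\/theoretic commutator $g=g_1g_2g_1^{-1}g_2^{-1}$ in $C^\infty(\cEinf,G)$, expand the exponentials to second order, observe the cancellations, and identify the leading term $\lambda\mu\,[p_1,p_2]$ with $[p_1,p_2]\in\fg$. The paper's proof is exactly this computation (without the BCH gloss or the extra commentary on the composition law), so your write\/-\/up is a slightly more expansive version of the same argument; one small notational quibble is that the remainder is more accurately written as $o(\lambda^2+\mu^2)$ rather than $o(\lambda\mu)$.
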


\begin{proof}
By definition, put $g=g_1\circ g_2\circ g^{-1}_1\circ g_2^{-1}$.
Taking into account that $g_1^{-1}=\bunit-\lambda p_1+\tfrac{1}{2}\lambda^2 p_1^2 
+ o(\lambda^2)$ and $g_2^{-1}=\bunit-\mu p_2+\tfrac{1}{2}\mu^2 p_2^2+ o(\mu^2)$, 
we obtain that
\[
g = g_1g_2g_1^{-1}g_2^{-2} 
=\bunit + \lambda\mu\cdot(p_1p_2-p_2p_1) 
+ o(\lambda^2 + \mu^2).
\]
We finally recall that $[p_1, p_2] \in \fg$, whence follows the assertion.
\end{proof}

An infinitesimal gauge transformation $g =
\bunit + \lambda p + o(\lambda)$ acts on a given $\fg$-valued zero\/-\/curvature representation~$\alpha$ for an equation~$\cEinf$ by the formula
\begin{multline*}
\alpha^g =  \bar{\Id}_h( \bunit + \lambda p + o(\lambda)) \cdot
(\bunit - \lambda p + o(\lambda)) + ( \bunit + \lambda p +
o(\lambda)) \cdot \alpha \cdot ( \bunit - \lambda p + o(\lambda)) \\
= \lambda \bar{\Id}_h p + \alpha + \lambda( p \alpha - \alpha p) +
o(\lambda)
= \alpha + \lambda( \bar{\Id}_h p + [p, \alpha]) + o(\lambda).
\end{multline*}
From the coefficient of~$\lambda$ we obtain the operator
$\bar{\boldsymbol{\partial}}_{\alpha} = \bar{\Id}_h + [ \cdot, \alpha]$. 
Lemma~\ref{lemmaGaugeInfCommute} implies that the image of this operator is closed under commutation in~$\fg$, that is,
$[\img \bar{\boldsymbol{\partial}}_{\alpha}, \img \bar{\boldsymbol{\partial}}_{\alpha} ]
 \subseteq \img \bar{\boldsymbol{\partial}}_{\alpha}$.
Such operators and their properties were studied 
in~\cite{KiselevTMPh2011,KiselevLeur2011}.
We now claim that the operator~$\bar{\boldsymbol{\partial}}_{\alpha}$ yields
the anchor in a non\/-\/Abelian variational Lie algebroid, see Fig.~\ref{FigNonAbelAlgd}; 
\begin{figure}[htb]
{\unitlength=1mm
\special{em:linewidth 0.4pt}
\linethickness{0.4pt}
\begin{picture}(75.00,55)
\put(0.00,5.00){\vector(1,0){25.00}}
\put(12.67,16.67){\vector(0,-1){9.33}}
\put(12.67,5.00){\circle*{1.00}}
\put(12.67,20.00){\vector(0,1){30.00}}
\put(9.33,51.67){\makebox(0,0)[lb]{$\overline{J^{\infty}_{\chi}}(\xi)$}}
\put(14.67,42.67){\makebox(0,0)[lb]{$[\,,\,]_{\mathfrak{g}}$}}
\put(14.67,10.33){\makebox(0,0)[lb]{$\chi_{\infty}\circ\chi^*_{\infty}(\xi)$}}
\put(2.00,5.67){\makebox(0,0)[lb]{$M^n$}}
\put(11.67,1.33){\makebox(0,0)[lb]{$x$}}
\put(21.00,28.33){\vector(1,0){30.00}}
\put(21.33,30.33){\makebox(0,0)[lb]{$\boldsymbol{\dd}_{\alpha}=\Id_h+[\,\cdot\,,\alpha]$}}
\put(49.00,5.00){\vector(1,0){26.00}}
\put(62.33,16.33){\vector(0,-1){9.00}}
\put(62.33,5.00){\circle*{1.00}}
\put(62.33,20.00){\vector(0,1){30.00}}
\put(59.67,22.00){\vector(0,1){8}}
\put(59.67,36.00){\vector(0,1){8}}
\put(65.00,22.00){\vector(0,1){8}}
\put(65.00,36.00){\vector(0,1){8}}
\put(58.67,51.33){\makebox(0,0)[lb]{$\overline{J^{\infty}_{\xi}}(\chi)$}}
\put(69.00,42.00){\makebox(0,0)[lb]{$[\,,\,]$}}
\put(64.00,9.33){\makebox(0,0)[lb]{$\xi_{\infty}\circ\xi^*_{\infty}(\chi)$}}
\put(50.67,5.67){\makebox(0,0)[lb]{$M^n$}}
\put(61.33,1.33){\makebox(0,0)[lb]{$x$}}
\end{picture}}
\caption{Non\/-\/Abelian variational Lie algebroid.}\label{FigNonAbelAlgd}
\end{figure}
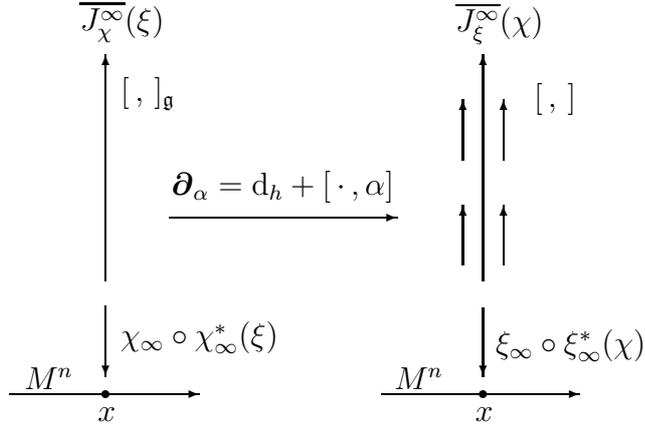
this construction is elementary
(see Remark~\ref{RemNoExcess} on p.~\pageref{RemNoExcess}). Namely, the non-Abelian Lie algebroid
$\left(\pi_{\infty}^*\circ\chi^*_{\infty}(\xi),\boldsymbol{\dd}_{\alpha},[\,,\,]_{\mathfrak{g}}\right)$ 
consists of
\begin{itemize}
\item
the pull-back of the bundle $\xi$ for $\mathfrak{g}$-valued gauge parameters $p$\,; the pull-back is obtained by using the 
bundle $\chi$ for $\mathfrak{g}$-forms $\alpha$ and 
(again by using the infinite jet bundle $\pi_{\infty}$ over)
the bundle $\pi$ of physical fields,
\item
the (restriction $\bar{\boldsymbol{\dd}}_{\alpha}$ to $\cE^{\infty}\subseteq J^{\infty}(\pi)$ of the)
anchor $\boldsymbol{\dd}_{\alpha}$ that generates infinitesimal gauge transformations 
$\dot\alpha=\boldsymbol{\dd}_{\alpha}(p)$ in the bundle $\chi$ of $\mathfrak{g}$-valued connection one-forms, and
\item
the Lie algebra structure $[\,,\,]_{\mathfrak{g}}$ on the anchor's domain of definition.
\end{itemize}
\noindent
We refer to Appendix~\ref{AppLieAlgd} for more detail and 
to p.~\pageref{pDiscussion} for discussion on that object's structural complexity.

\subsection{Noether identities for the Maurer\/--\/Cartan equation}\label{secNoetherIdMC}
In the meantime, let us discuss Noether identities~\cite{BVV,GDE2012,Olver} 
for Maurer\/--\/Cartan equation~\eqref{eqMC}. 
Depending on the dimension~$n$ of the base
manifold~$M^n$, we consider the cases $n=2$, $n=3$, and $n>3$. 
We suppose that the Lie algebra~$\fg$ is equipped\footnote{Notice that the Lie algebra~$\fg$ is canonically
identified with its dual~$\fg^*$ via nondegenerate metric $t_{ij}$.}
with a nondegenerate $\ad$-\/invariant metric~$t_{ij}$.  
The paring $\langle\,,\,\rangle$ is
defined for elements of~$\fg\otimes\Lambda(M^n)$ as follows,
\[
\langle A\mu, B\nu \rangle = \langle A, B \rangle\,\mu \wedge \nu,
\]
where the coupling $\langle A, B \rangle$ is given by the metric~$t_{ij}$
for~$\fg$. From the $\ad$-\/invariance $\langle [A,B], 
C\rangle = \langle A, [B, C] \rangle$ of the metric~$t_{ij}$ we deduce
that 
\begin{multline*}
  \langle [A\mu, B\nu], C\rho \rangle = \langle [A, B]\, \mu \wedge \nu,
  C\rho \rangle  =  \langle [A, B], C \rangle\, \mu\wedge\nu\wedge\rho = 
  \langle A, [B, C] \rangle\, \mu \wedge\nu\wedge\rho \\
  {}= \langle A\mu, [B,C]\, \nu\wedge\rho \rangle  = \langle A \mu, [B
  \nu, C \rho] \rangle.
\end{multline*}
Let us denote by $\cF = -\Id_h \alpha + \tfrac12 [\alpha,\alpha]$
the left\/-\/hand side of Maurer\/--\/Cartan equation~\eqref{eqMC}.
We recall from section~\ref{subsecGauge} that $\dot\alpha
= \boldsymbol{\partial}_{\alpha}(p)$ is a gauge symmetry of
Maurer\/--\/Cartan equation~\eqref{eqMC}. Moreover, for all~$n>1$ the
operator $
\boldsymbol{\dd}_\alpha^\dag
$ produces a Noether
identity for~\eqref{eqMC}, which is readily seen from the following statement.

\begin{prop}\label{propMCNoetherIdenN2}
The left\/-\/hand sides $\cF = -\Id_h\alpha + \tfrac12[\alpha, \alpha]$
of Maurer\/--\/Cartan's equation satisfy the Noether identity 
\textup{(}or \textsl{Bianchi identity} for the curvature two\/-\/form\textup{)}
\begin{equation}\label{eqNoetherId}
\boldsymbol{\dd}_{\alpha}^\dagger(\cF)= - \Id_h \cF - [\cF, \alpha] \equiv 0.
\end{equation}
\end{prop}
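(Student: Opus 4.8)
The plan is to show that \eqref{eqNoetherId} is a purely algebraic (off\/-\/shell) identity, valid for \emph{every} $\fg$-\/valued horizontal one\/-\/form~$\alpha$ and not only for those solving the Maurer\/--\/Cartan equation~\eqref{eqMC}; no appeal to~$\cE$ or its differential consequences will be needed. Concretely, I would substitute $\cF = -\Id_h\alpha + \tfrac12[\alpha,\alpha]$ into the right\/-\/hand side $-\Id_h\cF - [\cF,\alpha]$ and handle the two summands separately, using only the structural properties of~$\Id_h$ and of the bracket on $\fg\otimes\bar\Lambda(\cEinf)$.

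For the term $\Id_h\cF$ I would invoke two facts. First, the nilpotency $\Id_h^2 = 0$ annihilates the contribution $-\Id_h^2\alpha$. Second, the graded Leibniz rule $\Id_h[\beta,\gamma] = [\Id_h\beta,\gamma] + (-1)^{\deg\beta}[\beta,\Id_h\gamma]$, which itself follows from $\Id_h(A\mu) = A\,\Id_h\mu$ together with the ordinary Leibniz rule for~$\Id_h$ on $\bar\Lambda(\cEinf)$. Applying this to $[\alpha,\alpha]$ with $\deg\alpha = 1$, and combining it with the graded antisymmetry $[\Id_h\alpha,\alpha] = -[\alpha,\Id_h\alpha]$ (here $\Id_h\alpha$ is a two\/-\/form, so the sign is $-(-1)^{2\cdot 1} = -1$), collapses $\Id_h[\alpha,\alpha]$ to $2[\Id_h\alpha,\alpha]$. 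Hence $\Id_h\cF = [\Id_h\alpha,\alpha]$.

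Next I would expand $[\cF,\alpha] = -[\Id_h\alpha,\alpha] + \tfrac12[[\alpha,\alpha],\alpha]$ straight from the definition of~$\cF$ and bilinearity of the bracket. Adding the two pieces, the leading terms cancel, $[\Id_h\alpha,\alpha] - [\Id_h\alpha,\alpha] = 0$, and one is left with $-\Id_h\cF - [\cF,\alpha] = -\tfrac12[[\alpha,\alpha],\alpha]$. It then remains to verify that this triple bracket vanishes. Writing $\alpha = A_i\,\Id x^i$, one gets $[[\alpha,\alpha],\alpha] = [[A_i,A_j],A_k]\,\Id x^i\wedge\Id x^j\wedge\Id x^k$, and the total antisymmetrization forced by the wedge product reproduces precisely the cyclic sum $[[A_i,A_j],A_k] + [[A_j,A_k],A_i] + [[A_k,A_i],A_j]$, which is zero by the Jacobi identity for~$\fg$ already recorded at the start of the section. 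This establishes~\eqref{eqNoetherId}.

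The computation involves no conceptual difficulty; the only place demanding care is the sign bookkeeping in the graded Leibniz rule and the graded antisymmetry, where the odd form\/-\/degree of~$\alpha$ must be tracked consistently. It is exactly the factor $(-1)^{\deg\alpha} = -1$ that turns $\Id_h[\alpha,\alpha]$ into twice $[\Id_h\alpha,\alpha]$ rather than into zero, so a careless sign would make the cancellation fail. I would also stress that \eqref{eqNoetherId} is the Bianchi identity $\Id_h\cF + [\cF,\alpha] = 0$ for the curvature, and that its validity \emph{before} any restriction to~$\cEinf$ is precisely what qualifies $\boldsymbol{\dd}_{\alpha}^{\dagger}$ as producing a genuine Noether identity for the Maurer\/--\/Cartan equation.
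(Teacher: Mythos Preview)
Your proof is correct and follows essentially the same route as the paper's: both substitute $\cF$ into $-\Id_h\cF-[\cF,\alpha]$, use $\Id_h^2=0$ and the graded Leibniz rule to produce a cancellation of the $[\Id_h\alpha,\alpha]$ terms, and then invoke the Jacobi identity to kill the remaining triple bracket. The only cosmetic difference is that you organize the computation by treating $\Id_h\cF$ and $[\cF,\alpha]$ separately before adding, whereas the paper expands everything in a single line.
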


\begin{proof}
Applying the operator $\boldsymbol{\dd}_{\alpha}^\dagger$ to the left\/-\/hand sides
of Maurer\/--\/Cartan's equation, we obtain
\begin{multline*}
\boldsymbol{\dd}_{\alpha}^\dagger(\cF) = 
\boldsymbol{\dd}_{\alpha}^\dagger\bigl(-\Id_h \alpha + \tfrac12[\alpha, \alpha]\bigr)= 
(-\Id_h - [\cdot ,\alpha])\bigl(-\Id_h \alpha + \tfrac12[\alpha, \alpha]\bigr)={} \\
{}= (\Id_h\circ\Id_h) \alpha -\tfrac12 \Id_h \bigl([\alpha, \alpha]\bigr) 
+ [\Id_h \alpha, \alpha] - \tfrac12 [\alpha, [\alpha, \alpha]] ={}\\
{}= - [ \Id_h\alpha, \alpha] + [\Id_h \alpha, \alpha] - \tfrac12 [\alpha,
 [\alpha, \alpha]] = 0.    
\end{multline*}
The third term in the last line is zero due to the Jacobi identity,
whereas the first two cancel out.
\end{proof}

Let $n=2$. The Maurer\/--\/Cartan equation's left\/-\/hand sides~$\cF$ 
are top\/-\/degree forms, 
hence every operator 
which increases the form degree vanishes at~$\cF$.  


Consider the case $n=3$; 
we recall that Maurer\/--\/Cartan equation~\eqref{eqMC} is
Euler\/--\/Lagrange in this setup 
(cf.~\cite{TownsendAchucarro1986,AKZS,Witten1988}).

\begin{prop}\label{propMCLagrangian}
If the base manifold $M^3$ is $3$-dimensional, 
then Maurer\/--\/Cartan's equation is Euler\/--\/Lagrange 
with respect to the action functional
\begin{equation}\label{eqMCAction}
S_{\MC} = \int \cL =  \int \left\{ -\tfrac12 \langle \alpha, \Id_h \alpha \rangle
    + \tfrac16 \langle \alpha, [\alpha,\alpha] \rangle \right\}.
\end{equation}
Note that its Lagrangian density~$\cL$ is a well\/-\/defined top\/-\/degree
form on the base threefold~$M^3$.
\end{prop}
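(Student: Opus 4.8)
The plan is to compute the first variation $\delta S_{\MC}$ under an infinitesimal change $\alpha\mapsto\alpha+\delta\alpha$, with $\delta\alpha\in\fg\otimes\Lambda^{0,1}(\cEinf)$, working modulo $\Id_h$-exact terms (i.e.\ in the senior horizontal cohomology $\overline{H}{}^{3}$, where the formal integral $\int$ kills total $\Id_h$-divergences), and then to read off the Euler\/--\/Lagrange derivative $\delta\cL/\delta\alpha$. The assertion that Maurer\/--\/Cartan's equation is Euler\/--\/Lagrange amounts to the identity $\delta\cL/\delta\alpha=\cF$, so that stationarity $\delta S_{\MC}=0$ for every test $\delta\alpha$ coincides with $\cF=-\Id_h\alpha+\tfrac12[\alpha,\alpha]=0$; here the nondegeneracy of the metric $t_{ij}$ is used to strip off the test form. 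I first note that $\cL$ is automatically a well\/-\/defined horizontal top\/-\/form, since both $\langle\alpha,\Id_h\alpha\rangle$ and $\langle\alpha,[\alpha,\alpha]\rangle$ pair a horizontal one\/-\/form against a horizontal two\/-\/form and hence land in $\Lambda^{0,3}(\cEinf)$, which is top horizontal degree precisely because $\dim M^3=3$; this is exactly what singles out the value $n=3$.

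For the quadratic (kinetic) term I would use the graded Leibniz rule $\Id_h\langle\alpha,\delta\alpha\rangle=\langle\Id_h\alpha,\delta\alpha\rangle-\langle\alpha,\Id_h\delta\alpha\rangle$, the sign coming from $\deg\alpha=1$. Discarding the $\Id_h$-exact term under the integral turns $\int\langle\alpha,\Id_h\delta\alpha\rangle$ into $\int\langle\Id_h\alpha,\delta\alpha\rangle$, and the graded symmetry of the pairing $\langle A\mu,B\nu\rangle=\langle A,B\rangle\,\mu\wedge\nu$ (symmetric in $\fg$ via $t_{ij}$, with the Koszul sign from $\mu\wedge\nu$) identifies $\langle\Id_h\alpha,\delta\alpha\rangle$ with $\langle\delta\alpha,\Id_h\alpha\rangle$, the relevant degree product $2\cdot1$ being even. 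Hence the two contributions coming from $-\tfrac12\langle\alpha,\Id_h\alpha\rangle$ reinforce and give $-\langle\delta\alpha,\Id_h\alpha\rangle$ modulo $\Id_h$-exact terms.

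For the cubic term I would vary $\tfrac16\langle\alpha,[\alpha,\alpha]\rangle$ into three pieces $\tfrac16\bigl(\langle\delta\alpha,[\alpha,\alpha]\rangle+\langle\alpha,[\delta\alpha,\alpha]\rangle+\langle\alpha,[\alpha,\delta\alpha]\rangle\bigr)$. The key fact is the cyclic invariance $\langle x,[y,z]\rangle=\langle z,[x,y]\rangle$ for $\fg$-valued one\/-\/forms, which follows by combining the graded $\ad$-\/invariance recorded in the display $\langle[A\mu,B\nu],C\rho\rangle=\langle A\mu,[B\nu,C\rho]\rangle$ preceding this statement with the graded symmetry of the pairing (again the sign $(-1)^{2\cdot1}=+1$). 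Applying this cyclic invariance brings $\delta\alpha$ to the first slot in each of the three pieces, so all three equal $\langle\delta\alpha,[\alpha,\alpha]\rangle$ and their sum is $\tfrac12\langle\delta\alpha,[\alpha,\alpha]\rangle$. Collecting both contributions yields $\delta S_{\MC}=\int\langle\delta\alpha,\,-\Id_h\alpha+\tfrac12[\alpha,\alpha]\rangle=\int\langle\delta\alpha,\cF\rangle$, whence $\delta\cL/\delta\alpha=\cF$ and the claim follows.

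The only genuine obstacle is the sign bookkeeping: one must track consistently the Koszul signs produced by the wedge products, by the graded Leibniz rule for $\Id_h$, and by the graded $\ad$-\/invariance, and in particular verify that in the cubic term the three variational pieces \emph{reinforce} rather than partially cancel, so as to reproduce the coefficient $\tfrac12$ in $\cF$. Once the conventions for the form\/-\/valued bracket and the pairing $\langle\,,\,\rangle$ are fixed as in this subsection, all these signs are forced and the remaining computation is routine.
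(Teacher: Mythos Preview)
Your proposal is correct and follows essentially the same route as the paper's own proof: compute the variation of the two terms separately, use integration by parts and graded symmetry on the kinetic piece, and use the cyclic invariance of $\langle\,\cdot\,,[\,\cdot\,,\,\cdot\,]\rangle$ on the cubic piece to bring $\delta\alpha$ into the first slot in all three summands. The paper's proof is simply a one\/-\/line compressed version of exactly this computation; your additional commentary on the Koszul signs and on why the three cubic contributions reinforce is a welcome elaboration but introduces no new idea.
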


\begin{proof}
Let us construct the Euler\/--\/Lagrange equation:
\begin{multline*}
\delta \int\left\{-\tfrac12 \langle \alpha, \Id_h\alpha \rangle + \tfrac16\langle \alpha,
      [\alpha, \alpha] \rangle \right\} 
= \langle \delta\alpha, - \Id_h \alpha\rangle  +
    \tfrac16(\langle \delta \alpha, [\alpha, \alpha] \rangle + \langle
    \alpha, [\delta \alpha, \alpha] \rangle + \langle \alpha, [\alpha,
    \delta \alpha] \rangle \\
    = \langle \delta \alpha, - \Id_h \alpha + \tfrac12 [\alpha, \alpha] \rangle.
\end{multline*}
This proves our claim.
\end{proof}

\begin{prop}
For each $p\in \fg\otimes\Lambda^0(M^3)$, the evolutionary vector
field $\vec{\dd}^{\,(\alpha)}_{A(p)}$ with ge\-ne\-ra\-t\-ing section
$A(p)=\boldsymbol{\dd}_\alpha(p) = \Id_h p + [p, \alpha]$ is a Noether symmetry of the
action~$S_{\MC}$,\footnote{\label{FootNoBoundaty}%
Here $\cong$~denotes the equality up to integration by parts and we assume the absence of boundary terms.}
\[
\vec{\dd}^{\,(\alpha)}_{A(p)}(S_{\MC}) \cong 0 \in \overline{H}^n(\chi).
\]
The operator $A = \boldsymbol{\dd}_\alpha = \Id_h + [\cdot, \alpha]$ determines linear
Noether's identity~\eqref{eqNoetherId},
\[
\Phi(x, \alpha, \cF) =  A^\dag(\cF) \equiv 0,
\]
for left\/-\/hand sides of the system of Maurer\/--\/Cartan's equations~\eqref{eqMC}.
\end{prop}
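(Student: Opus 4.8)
The plan is to derive both assertions from the first\/-\/variation formula already obtained in the proof of Proposition~\ref{propMCLagrangian}, combined with the Bianchi identity of Proposition~\ref{propMCNoetherIdenN2}; the two claims are the two faces of Noether's second theorem, interchanged by passing to the formal adjoint of the anchor~$\boldsymbol{\dd}_\alpha$.

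First I would recall that the computation in Proposition~\ref{propMCLagrangian} expresses the first variation of the action along any direction~$\delta\alpha$ as
\[
\vec{\dd}^{\,(\alpha)}_{\delta\alpha}(S_{\MC}) \cong \int \langle \delta\alpha, \cF\rangle,\qquad \cF = -\Id_h\alpha + \tfrac12[\alpha,\alpha],
\]
modulo $\Id_h$-\/exact terms (that is, up to integration by parts, boundary contributions being assumed to vanish). Specializing to the generating section $\delta\alpha = A(p) = \boldsymbol{\dd}_\alpha(p) = \Id_h p + [p,\alpha]$ of the evolutionary field, I would then transfer the operator~$A$ from~$p$ onto~$\cF$ by two standard moves. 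The graded Leibniz rule for~$\Id_h$ over the invariant pairing gives $\int\langle \Id_h p,\cF\rangle \cong -\int\langle p, \Id_h\cF\rangle$, while the $\ad$-\/invariance of the metric recorded before Proposition~\ref{propMCNoetherIdenN2} gives $\langle[p,\alpha],\cF\rangle = \langle p,[\alpha,\cF]\rangle$; taking into account the graded antisymmetry $[\alpha,\cF] = -[\cF,\alpha]$ of a one\/-\/form against a two\/-\/form, these assemble into
\[
\vec{\dd}^{\,(\alpha)}_{A(p)}(S_{\MC}) \cong \int\langle p,\, -\Id_h\cF - [\cF,\alpha]\rangle = \int\langle p, \boldsymbol{\dd}_\alpha^\dagger(\cF)\rangle.
\]
Thus the formal adjoint of the anchor is exactly the operator $\boldsymbol{\dd}_\alpha^\dagger = -\Id_h - [\cdot\,,\alpha]$ appearing in~\eqref{eqNoetherId}.

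With this adjunction in hand both statements follow immediately. The right\/-\/hand side vanishes identically since $\boldsymbol{\dd}_\alpha^\dagger(\cF)\equiv 0$ by the Bianchi identity of Proposition~\ref{propMCNoetherIdenN2}; hence $\vec{\dd}^{\,(\alpha)}_{A(p)}(S_{\MC})\cong 0$ in $\overline{H}^n(\chi)$, which is the Noether\/-\/symmetry assertion. The remaining claim, that $A^\dag(\cF) = \boldsymbol{\dd}_\alpha^\dagger(\cF) \equiv 0$ is a Noether identity --\,linear in~$\cF$\,-- for the left\/-\/hand sides of~\eqref{eqMC}, is then merely a restatement of Proposition~\ref{propMCNoetherIdenN2}, now read as the adjoint counterpart of the gauge symmetry.

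I expect the only delicate point to be the sign bookkeeping in the graded calculus: one must check that the formal adjoint of~$\boldsymbol{\dd}_\alpha$ on $\fg$-\/valued forms of the relevant degrees is precisely~$\boldsymbol{\dd}_\alpha^\dagger$, and not its negative or a sign\/-\/twisted variant, since degree\/-\/dependent signs enter both through the Leibniz rule for~$\Id_h$ and through the graded antisymmetry of the bracket. Once the adjunction $\int\langle A(p),\cF\rangle \cong \int\langle p, A^\dag(\cF)\rangle$ is fixed with the correct signs, the coincidence of~$A^\dag$ with the Bianchi operator renders the argument immediate, and no computation beyond Propositions~\ref{propMCLagrangian} and~\ref{propMCNoetherIdenN2} is needed.
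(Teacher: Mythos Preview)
Your proposal is correct and follows essentially the same approach as the paper: express $\vec{\dd}^{\,(\alpha)}_{A(p)}(S_{\MC})$ as $\langle A(p),\cF\rangle$ via the first\/-\/variation formula, pass to the adjoint to obtain $\langle p, A^\dag(\cF)\rangle$, and then invoke Proposition~\ref{propMCNoetherIdenN2}. The only cosmetic difference is that the paper phrases the adjunction step through the linearization notation $\ell_\Phi^{(\cF)}$ of Noether's second theorem rather than computing the adjoint of $\boldsymbol{\dd}_\alpha$ explicitly from the Leibniz rule and $\ad$-\/invariance as you do.
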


\begin{proof}
We have
\[
\vec{\dd}^{\,(\alpha)}_{A(p)} S_{\MC} 
 \cong \langle A(p), \tfrac{\delta}{\delta \alpha}S_{\MC} \rangle
  \cong \left\langle \bigl(\ell_{\Phi}^{(\cF)}  \bigr)^\dag(p), \cF
  \right\rangle 
  \cong \langle p, \ell_\Phi^{(\cF)} (\cF)   \rangle 
  = \langle p, \Phi(\cF) \rangle 
  = \langle p, A^\dag(\cF) \rangle.
\]
In Proposition~\ref{propMCNoetherIdenN2} we prove that $A^\dag(\cF) \equiv 0$. 
So for all~$p$ we have that $\langle p, A^\dag(\cF) \rangle \cong 0$, which
concludes the proof. 
\end{proof}

Finally, we let $n>3$.
In this case of higher dimension, the Lagrangian $\cL = \langle
\alpha, \tfrac16 [\alpha, \alpha] - \tfrac12 \Id_h \alpha \rangle\in
\Lambda^3(M^n)$ does not belong to the space of top\/-\/degree
forms 
and Proposition~\ref{propMCLagrangian} does not hold. 
However, Noether's identity $\boldsymbol{\dd}_\alpha^\dagger(\cF)\equiv 0$
still holds if~$n>3$ according to Proposition~\ref{propMCNoetherIdenN2}.

\section{Non\/-\/Abelian variational Lie algebroids}\label{secNonAbelian}
\noindent%
Let $\vec{e}_1$,\ $\ldots$,\ $\vec{e}_d$ be a basis in the Lie algebra~$\fg$. 
Every $\mathfrak{g}$-\/valued zero\/-\/curvature representation for a given PDE 
system~$\cEinf$ is then
$\alpha = \alpha^k_i \vec{e}_k\,\Id x^i$ for some coefficient 
functions~$\alpha^k_i\in C^{\infty}(\cEinf)$.
Construct the vector bundle
$\chi \colon \Lambda^1(M^n)\otimes
\mathfrak{g} \to M^n$ 
and the trivial bundle $\xi \colon M^n\times\fg\to M^n$ with the Lie algebra~$\fg$ 
taken for fibre. 
Next, introduce the superbundle $\Pi\xi\colon M^n\mathbin{\times} \Pi\mathfrak{g}\to M^n$ 
the total space of which is the same as that of~$\xi$ but such that 
the parity of fibre coordinates is reversed\footnote{The odd neighbour~$\Pi\fg$ of the Lie algebra is introduced in order to handle poly\/-\/linear, totally skew\/-\/symmetric maps of elements of~$\fg$ so that the parity\/-\/odd space~$\Pi\fg$ carries the information about the Lie algebra's structure constants $c^k_{ij}$ still not itself becoming a Lie superalgebra.}
(see Appendix~\ref{AppOdd} on p.~\pageref{AppOdd}).
Finally, consider the Whitney sum
$J^{\infty}(\chi)\times_{M^n}J^{\infty}(\Pi\xi)$ 
of infinite jet bundles over the parity\/-\/even vector bundle~$\chi$ and parity\/-\/odd~$\Pi\xi$.

With the geometry of every $\mathfrak{g}$-\/valued zero\/-\/curvature representation 
we associate a non\/-\/Abelian variational Lie algebroid~\cite{KiselevTMPh2011}. 
Its realization by a homological evolutionary vector field is the differential in the arising gauge cohomology theory (cf.~\cite{Vaintrob} and~\cite{AKZS,KKIgonin2003,KiselevTMPh2011,KontsevichSoibelman,Marvan2002}).

\begin{theor}\label{propNonAbelianQ}
The parity\/-\/odd evolutionary vector field which encodes 
the non\/-\/Abelian variational Lie algebroid structure 
on the infinite jet superbundle $J^\infty(\chi\mathbin{{\times}_{M^n}}\Pi\xi)\cong
J^\infty(\chi)\mathbin{{\times}_{M^n}}J^\infty(\Pi\xi)$ is 
\begin{equation}\label{eqQBRST}
Q = \vec{\partial}^{\,(\alpha)}_{[b,\alpha] + \Id_h b} +
  \tfrac12 \vec{\partial}^{\,(b)}_{[b,b]}, \qquad [Q, Q] =0 \quad
  \Longleftrightarrow \quad Q^2 =0,
\end{equation}
where for each choice of respective indexes,
\begin{itemize}
\item 
$\alpha^k_\mu$ is 
a parity\/-\/even coordinate along fibres in the bundle~$\chi$ of $\mathfrak{g}$-\/valued one\/-\/forms,
\item 
$b^k$ is a parity\/-\/odd fibre coordinate in the bundle~$\Pi\xi$,
\item 
$c^k_{ij}$ is a structure constant in the Lie algebra $\mathfrak{g}$ so that 
$[b^i,b^j]^k = b^i c^k_{ij} b^j$ and $[b^i,\alpha^j]^k = b^i c^k_{ij} \alpha^j$,
\item
$\Id_h$ is the horizontal differential on the Whitney sum of infinite jet bundles,
\item
the operator $\boldsymbol{\partial}_{\alpha} = \Id_h + [\cdot,\alpha] \colon
\overline{J^\infty_\chi}(\Pi\xi)\cong J^\infty(\chi\mathbin{{\times}_{M^n}}\Pi\xi) \to
\overline{J^\infty_{\Pi\xi}}(\chi)\cong J^\infty(\chi\mathbin{{\times}_{M^n}}\Pi\xi)$
is the anchor.
\end{itemize}
\end{theor}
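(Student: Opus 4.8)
The plan is to reduce the homological property to two component identities and, ultimately, to the Jacobi identity for $\fg$. First I would dispose of the displayed equivalence in~\eqref{eqQBRST}: because $Q$ is parity\/-\/odd, the graded commutator satisfies $[Q,Q]=2Q^2$, so $[Q,Q]=0 \Leftrightarrow Q^2=0$ is automatic. Next, since $Q$ is an evolutionary vector field it commutes with every total derivative $D_{x^i}$; consequently $\tfrac12[Q,Q]=Q^2$ is again evolutionary (now parity\/-\/even), and such a field vanishes identically iff its generating section does, i.e.\ iff it vanishes on the fibre coordinates. Hence it suffices to verify the two identities $Q^2(b^k)=0$ and $Q^2(\alpha^k_i)=0$ on generators, where $Q(b^k)=\tfrac12 b^i c^k_{ij}b^j$ and $Q(\alpha^k_i)=D_{x^i}b^k + b^a c^k_{ab}\alpha^b_i$.

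For the ghost sector I would apply $Q$ to $Q(b^k)=\tfrac12 b^ic^k_{ij}b^j$ via the odd Leibniz rule, minding the sign $(-1)^{|b^i|}=-1$ incurred when $Q$ is carried past the odd coordinate $b^i$. The result is cubic in the $b$'s, weighted by a product $c^\bullet_{\bullet\bullet}c^k_{\bullet\bullet}$ of two structure constants; since the $b$'s anticommute, only the totally antisymmetric part of that product survives, and this is exactly the cyclic Jacobi combination for $\fg$. Thus $Q^2(b^k)=0$ is equivalent to the Jacobi identity\,---\,the Chevalley--Eilenberg nilpotency already carried by the second summand $\tfrac12\vec{\partial}^{(b)}_{[b,b]}$ alone.

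For the connection sector I would expand $Q^2(\alpha^k_i)=Q(D_{x^i}b^k)+Q(b^ac^k_{ab}\alpha^b_i)$. Using $[Q,D_{x^i}]=0$, the first summand becomes $D_{x^i}Q(b^k)=\tfrac12 D_{x^i}[b,b]^k$, while the second splits, by the odd Leibniz rule, into a term where $Q$ differentiates the ghost $b^a$ and a term where it differentiates $\alpha^b_i$. I expect the cancellations to occur in two stages. The terms carrying a total derivative of $b$, namely $\tfrac12 D_{x^i}[b,b]^k$ and $-b^ac^k_{ab}D_{x^i}b^b$, annihilate each other: expanding the first by Leibniz and relabelling via $c^k_{ab}=-c^k_{ba}$ together with the anticommutativity of the $b$'s shows it equals $b^ac^k_{ab}D_{x^i}b^b$. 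The remaining terms are linear in $\alpha$ and quadratic in $b$ with two contracted structure constants; antisymmetrising in the two ghost indices (forced by $b^pb^q=-b^qb^p$) turns their coefficient into the same cyclic Jacobi combination, which vanishes. Hence $Q^2(\alpha^k_i)=0$, completing $Q^2=0$.

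Finally, to certify that $Q$ genuinely \emph{encodes} the variational Lie algebroid of Fig.~\ref{FigNonAbelAlgd}, in the spirit of Vaintrob's correspondence~\cite{Vaintrob}, I would read off its two constituents: the piece $\vec{\partial}^{(\alpha)}_{\boldsymbol{\partial}_\alpha(b)}$ reproduces the anchor $\boldsymbol{\partial}_\alpha=\Id_h+[\,\cdot\,,\alpha]$ evaluated on the odd gauge parameter $b$ (whose image is closed under commutation by Lemma~\ref{lemmaGaugeInfCommute}), whereas $\tfrac12\vec{\partial}^{(b)}_{[b,b]}$ is the Chevalley--Eilenberg differential carrying the bracket $[\,,\,]_{\fg}$. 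The hard part will be purely bookkeeping: keeping the signs from the odd parities of both $b$ and $Q$ consistent, and invoking $[Q,D_{x^i}]=0$ correctly so that $Q$ passes through the horizontal differential $\Id_h$. Once this is handled, the whole statement rests on the single structural input\,---\,the Jacobi identity for $\fg$\,---\,exactly as the two defining axioms of a Lie algebroid (skew Jacobi bracket plus closedness of the anchor's image) would predict.
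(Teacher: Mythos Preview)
Your proposal is correct and follows essentially the same route as the paper's proof: reduce $Q^2=0$ to the vanishing of the generating sections in the $b$- and $\alpha$-sectors, dispatch the ghost sector by the Jacobi identity, and in the connection sector split off the $\Id_h$-terms (which cancel by the Leibniz rule) from the purely algebraic $b\,b\,\alpha$-terms (which again reduce to Jacobi). The only cosmetic difference is that the paper unpacks the odd-coordinate expressions by evaluating the resulting bilinear skew maps at a pair of sections $p_1,p_2\in\Gamma(\xi)$, whereas you carry out the same cancellations directly in the anticommuting coordinates~$b^i$; these are equivalent bookkeeping devices.
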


\begin{proof}
The anticommutator $[Q,Q] = 2Q^2$ of the parity\/-\/odd vector field~$Q$ with itself is again an evolutionary vector field.
Therefore it suffices to prove that the coefficients of 
$\vec{\partial}/\partial \alpha$ and $\vec{\partial}/\partial b$ are equal to zero in the vector field
\[
Q^2 = \left(\vec{\partial}^{\,(\alpha)}_{[b,\alpha] + \Id_h b} +
  \tfrac12 \vec{\partial}^{\,(b)}_{[b,b]}\right)
\left(\vec{\partial}^{\,(\alpha)}_{[b,\alpha] + \Id_h b} +
  \tfrac12 \vec{\partial}^{\,(b)}_{[b,b]}\right).
\]
We have $[b,b]^k = b^i c_{ij}^k b^j$ by definition. Hence it is readily seen that 
$(\tfrac12 \vec{\partial}^{\,(b)}_{b^i c^k_{ij} b^j})^2 = 0$ 
because $\mathfrak{g}$~is a Lie algebra~\cite{Voronov2002} 
so that the Jacobi identity is satisfied by the structure constants.
Since the bracket $[b,b]$ does not depend on~$\alpha$, we deduce that
$ (\vec{\partial}^{\,(\alpha)}_{[b,\alpha] + \Id_h b}) (\tfrac12
\vec{\partial}^{\,(b)}_{[b,b]}) = 0$. Therefore,
\begin{multline*}
    Q^2 = \left(\vec{\partial}^{\,(\alpha)}_{[b,\alpha] + \Id_h b} +
      \tfrac12 \vec{\partial}^{\,(b)}_{[b,b]}\right)
    \left(\vec{\partial}^{\,(\alpha)}_{[b,\alpha] + \Id_h b} \right) =
    - \vec{\partial}^{\,(\alpha)}_{[b,[b,\alpha] + \Id_h b]} +
    \tfrac12 \vec{\partial}^{\,(\alpha)}_{[[b,b],\alpha]+\Id_h([b,b])}\\
    = \vec{\partial}^{\,(\alpha)}_{ -[b,[b,\alpha] + \Id_h b] +
      \frac12[[b,b],\alpha]+\frac12\Id_h([b,b])}.
\end{multline*}
Now consider the expression $-[b,[b,\alpha] + \Id_h b]
+\tfrac12[[b,b],\alpha]+\tfrac12\Id_h([b,b])$, viewing it as a bi\/-\/linear 
skew\/-\/symmetric map \label{pProofTh1}
$\Gamma(\xi)\times\Gamma(\xi)\to\Gamma(\chi)$. 
First, we claim that the value 
$\bigl( \tfrac12[[b,b],\alpha] - [b,[b,\alpha]] \bigr)(p_1,p_2)$
at any two sections $p_1$,\ $p_2\in \Gamma(\xi)$ vanishes identically. Indeed, 
by taking an alternating sum over the permutation group of two elements we have that
\begin{multline*}
 \tfrac12[[p_1,p_2],\alpha] - \tfrac12[[p_2,p_1],\alpha] -
 [p_1,[p_2,\alpha]] + [p_2,[p_1,\alpha]] = [[p_1,p_2],\alpha] -
 [p_1,[p_2,\alpha]] - [p_2,[\alpha,p_1]] \\= - [\alpha, [p_1,p_2]]
    - [p_1,[p_2,\alpha]] - [p_2,[\alpha,p_1] = 0.
\end{multline*}
At the same time, the value of bi\/-\/linear skew\/-\/symmetric mapping 
$\tfrac12 \Id_h([b,b]) - [b,\Id_h b]$ at sections $p_1$ and $p_2$ also vanishes,
\[
\tfrac12 \Id_h([p_1,p_2]) - \tfrac12 \Id_h([p_2,p_1]) -
[p_1, \Id_h p_2] + [p_2, \Id_h p_1] = \Id_h([p_1, p_2])
- [p_1, \Id_h p_2] - [\Id_h p_1, p_2] = 0.
\]
We conclude that 
\[
Q^2{\Bigr|}_{(p_1,p_2)} 
= \vec{\partial}^{\,(\alpha)}_{ \left\{-[b,[b,\alpha] + \Id_h b] +
    \frac12[[b,b],\alpha]+\frac12\Id_h([b,b])\right\}(p_1,p_2) }
  = \vec{\partial}^{\,(\alpha)}_{0} = 0,
  \]
which proves the theorem.
\end{proof}

Finally, let us derive a reparametrization formula for the homological vector field~$Q$ 
in the course of gauge transformations of zero\/-\/curvature representations.
We begin with some trivial facts~\cite{Bourbaki,DNF}.

\begin{lemma}
Let $\alpha$~be a $\fg$-valued zero\/-\/curvature representation for a PDE system. Consider two infinitesimal gauge transformations 
given by $g_1 = \bunit +\veps p_1 + o(\veps)$ and $g_2= \bunit + \veps p_2 + o(\veps)$. 
Let $g\in C^\infty(\cEinf,G)$ also determine a gauge transformation. 
Then the following diagram is commutative,
\[\begin{CD}
  \alpha^g @>{g_2}>> \beta \\
  @AA{g}A  @AA{g}A \\
  \alpha @>{g_1}>> \alpha^{g_1},
\end{CD}\]
if the relation $p_2 = g \cdot p_1 \cdot g^{-1}$ is valid.
\end{lemma}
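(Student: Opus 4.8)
The plan is to reduce commutativity of the square to the composition law for gauge transformations obtained in Subsection~\ref{subsecGauge}, after which the claim will follow by matching two group elements to first order in~$\veps$. Reading the arrows of the diagram, the two routes from~$\alpha$ to the upper\/-\/right corner~$\beta$ are: first apply~$g$ and then~$g_2$, which gives $\beta=(\alpha^g)^{g_2}$; or first apply~$g_1$ and then~$g$, which gives $\beta=(\alpha^{g_1})^g$. Hence commutativity amounts precisely to the equality $(\alpha^g)^{g_2}=(\alpha^{g_1})^g$.

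First I would invoke the composition rule $(\alpha^{h_1})^{h_2}=\alpha^{h_2\cdot h_1}$, which was verified above for arbitrary $G$\/-\/valued functions and therefore applies to the finite transformation~$g$ as well as to the infinitesimal~$g_1$,\ $g_2$. Applied along each route it yields $(\alpha^g)^{g_2}=\alpha^{g_2\cdot g}$ and $(\alpha^{g_1})^g=\alpha^{g\cdot g_1}$. Since the transformed connection $\alpha^h$ depends only on the function~$h$, commutativity will follow once the two group elements $g_2\cdot g$ and $g\cdot g_1$ are shown to coincide; and because $g_1$,\ $g_2$ are retained only modulo $o(\veps)$, it suffices to match them to first order in~$\veps$.

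The remaining step is a direct expansion. Substituting $g_1=\bunit+\veps p_1+o(\veps)$ and $g_2=\bunit+\veps p_2+o(\veps)$ gives
\[
g_2\cdot g=g+\veps\,p_2\,g+o(\veps),\qquad g\cdot g_1=g+\veps\,g\,p_1+o(\veps).
\]
Equating the coefficients of~$\veps$ yields $p_2\,g=g\,p_1$, that is, $p_2=g\cdot p_1\cdot g^{-1}$, which is exactly the hypothesis of the lemma; conversely, this relation makes the two elements agree to first order and hence closes the square.

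The computation is elementary, and the only point demanding care is the bookkeeping of the composition order (the paper's convention that $g_2\circ g_1$ means ``$g_1$ first, then $g_2$'') together with a consistent truncation at first order: the finite element~$g$ must be kept exact while~$g_1$ and~$g_2$ are linearized, so that conjugation by~$g$ converts the linear term $g\,p_1$ on one side into $p_2\,g$ on the other and thereby produces the adjoint relation $p_2=g\,p_1\,g^{-1}$.
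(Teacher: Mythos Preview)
Your argument is correct and follows essentially the same route as the paper: both reduce commutativity of the square to the equality $g\cdot g_1 = g_2\cdot g$ of composed gauge transformations and then expand to first order in~$\veps$ to obtain $g\,p_1 = p_2\,g$, i.e.\ $p_2 = g\,p_1\,g^{-1}$. Your version is slightly more explicit in invoking the composition law $(\alpha^{h_1})^{h_2}=\alpha^{h_2\cdot h_1}$ and in tracking the logical direction, but the substance is identical.
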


\begin{proof}
By the lemma's assumption we have that $(\alpha^{g_1})^g = (\alpha^{g})^{g_2}$. 
Hence we deduce~that
\[
g\cdot(\bunit + \veps p_1) =  (\bunit + \veps p_2)\cdot g 
\qquad\Longleftrightarrow\qquad
g\cdot p_1 =  p_2 \cdot g,
\]
which yields the transformation rule
$p_2 =  g\cdot p_1 \cdot g^{-1}$
for the $\fg$-\/valued function~$p_1$ on~$\cEinf$ in the course of gauge 
transformation~$g\colon\alpha\mapsto\alpha^g$.
\end{proof}

Using the above lemma we describe the behaviour of homological vector field~$Q$ in the 
non\/-\/Abelian variational setup of Theorem~\ref{propNonAbelianQ}.

\begin{cor}
Under a coordinate change
\[
\alpha\mapsto\alpha' =  g\cdot \alpha \cdot g^{-1} + {d}_h g\cdot g^{-1}, \qquad
b\mapsto b'  =  g\cdot b \cdot g^{-1},
\]
where $g\in C^\infty(M^n,G)$, the variational Lie algebroid's differential~$Q$ 
is transformed accordingly
\textup{:}
\[
Q\longmapsto Q' = \vec{\partial}^{\,(\alpha')}_{[b',\alpha'] 
  + \Id_h b'} +
\tfrac12 \vec{\partial}^{\,(b')}_{[b',b']}.
\]
\end{cor}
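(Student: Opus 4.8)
The plan is to exploit that $Q$ is an evolutionary vector field: it is completely determined by its action on the fibre coordinates $\alpha$ and $b$, it commutes with the horizontal differential $\Id_h$, and it acts trivially on any object pulled back from the base. Since the gauge parameter is taken in $C^\infty(M^n,G)$, it depends on $M^n$ alone, so that $Q(g)=Q(g^{-1})=Q(\Id_h g)=0$. Therefore, to show that the coordinate change sends $Q$ to the evolutionary field $Q'$ with generating sections $[b',\alpha']+\Id_h b'$ and $\tfrac12[b',b']$, it is enough to verify the two identities $Q(\alpha')=[b',\alpha']+\Id_h b'$ and $Q(b')=\tfrac12[b',b']$; an evolutionary derivation is pinned down by its values on the coordinate functions.

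First I would dispatch the parity\/-\/odd coordinate. Applying $Q$ to $b'=g\,b\,g^{-1}$ and using that $g$ is $Q$-closed, the Leibniz rule gives $Q(b')=g\,Q(b)\,g^{-1}=\tfrac12\,g\,[b,b]\,g^{-1}$. Because the adjoint action $\operatorname{Ad}_g$ is an automorphism of $\fg$, one has $g\,[b,b]\,g^{-1}=[g\,b\,g^{-1},\,g\,b\,g^{-1}]=[b',b']$, which is the required section $\tfrac12[b',b']$.

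Next I would treat the connection coordinate. The inhomogeneous summand $\Id_h g\cdot g^{-1}$ of $\alpha'$ is annihilated by $Q$, so $Q(\alpha')=g\,Q(\alpha)\,g^{-1}=g\,(\Id_h b+[b,\alpha])\,g^{-1}=g\,\boldsymbol{\partial}_\alpha(b)\,g^{-1}$. It then remains to prove the covariance of the anchor, $g\,\boldsymbol{\partial}_\alpha(b)\,g^{-1}=\Id_h b'+[b',\alpha']$. This I would check by expanding both terms on the right: the Leibniz rule yields $\Id_h b'=(\Id_h g)\,b\,g^{-1}+g\,(\Id_h b)\,g^{-1}-g\,b\,g^{-1}(\Id_h g)\,g^{-1}$, while splitting the bracket over the two summands of $\alpha'$ gives $[b',\alpha']=g\,[b,\alpha]\,g^{-1}+g\,b\,g^{-1}(\Id_h g)\,g^{-1}-(\Id_h g)\,b\,g^{-1}$. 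Adding these, the two pairs of cross\/-\/terms containing $\Id_h g$ cancel, leaving exactly $g\,(\Id_h b+[b,\alpha])\,g^{-1}$, as wanted.

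The main obstacle is precisely this anchor\/-\/covariance step: the Maurer\/--\/Cartan correction $\Id_h g\cdot g^{-1}$ in the transformation law for $\alpha$ generates cross\/-\/terms upon bracketing with $b'$, and the whole argument hinges on recognising that these are cancelled, with opposite sign, by the extra terms produced when $\Id_h$ is distributed across the conjugate $g\,b\,g^{-1}$. This balancing is exactly what makes $\boldsymbol{\partial}_\alpha=\Id_h+[\,\cdot\,,\alpha]$ gauge\/-\/covariant and hence a legitimate anchor; once it is established, the corollary follows from the two coordinate identities with no further computation.
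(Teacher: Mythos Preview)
Your proof is correct. The paper does not write out a proof of this corollary; it merely says that it follows ``using the above lemma,'' where that lemma establishes, at the group level, the commutative square $(\alpha^{g_1})^g=(\alpha^g)^{g_2}$ with $p_2=g\,p_1\,g^{-1}$. Linearising that square in~$\varepsilon$ is precisely the anchor covariance $g\,\boldsymbol{\partial}_\alpha(b)\,g^{-1}=\boldsymbol{\partial}_{\alpha'}(b')$ that you verify by the direct Lie\/-\/algebra computation with the cancelling $\Id_h g$ cross\/-\/terms; the $b$-component then follows from $\operatorname{Ad}_g$ being a Lie algebra automorphism, as you note. So your argument is the explicit, coordinate\/-\/level unpacking of what the paper leaves to the reader, and it matches the intended route.

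One small remark worth making explicit in your write\/-\/up: you correctly rely on the hypothesis $g\in C^\infty(M^n,G)$ (rather than $g\in C^\infty(\cE^\infty,G)$ as elsewhere in the paper) to conclude $Q(g)=0$ and $Q(\Id_h g)=0$. This restriction is exactly what makes the evolutionary field $Q$ see $g$ as a constant, and it is the reason the corollary is stated with base\/-\/only~$g$.
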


\section{The master\/-\/functional for zero\/-\/curvature representations}\label{secMaster}
\noindent%
The correspondence between zero\/-\/curvature representations, i.e., classes of gauge\/-\/equi\-va\-lent 
solutions~$\alpha$ to the Maurer\/--\/Cartan equation, 
and non\/-\/Abelian variational Lie algebroids  
goes in parallel with the BRST\/-\/technique, 
in the frames of which ghost variables 
appear and gauge algebroids arise (see~\cite{Barnich2010,Praha2011}).
Let us therefore extend the BRST\/-\/setup of fields~$\alpha$ and ghosts~$b$
to the full BV-\/zoo of (anti)\/fields~$\alpha$ and~$\alpha^*$ and
(anti)\/ghosts~$b$ and~$b^*$ (cf.~\cite{BV1981-83,BRST,HenneauxTeitelboim}).
We note that a finite\/-\/dimensional `forefather' of what follows is discussed in detail in~\cite{AKZS}, which is devoted to $Q$-{} and $QP$-\/structures on (super)\/manifolds.
Those concepts are standard; our message is that not only the approach 
of~\cite{AKZS} to $QP$-\/structures on $G$-\/manifolds~$X$ and 
$\Pi T^*\bigl(X\times\Pi TG/G\bigr)\simeq\Pi T^*X\times\fg^*\times\Pi\fg$
remains applicable in the variational setup of jet bundles 
(i.e., whenever integrations by parts are allowed, whence many Leibniz rule
structures are lost, see Appendix~\ref{AppClassics}),
but even the explicit formulas for the BRST-\/field~$Q$ and 
the action functional~$\what{S}$ for the extended field~$\what{Q}$ 
are valid literally.
In fact, we recover the \emph{third} and \emph{fourth} equivalent formulations
of the definition for a variational Lie algebroid (cf.~\cite{AKZS,Vaintrob} or a review~\cite{YKSDB}).

Let us recall from section~\ref{secNonAbelian} that $\alpha$ is a tuple of 
even\/-\/parity fibre coordinates in the bundle $\chi\colon \Lambda^1(M^n)\otimes\fg\to 
M^n$ and $b$ are the odd\/-\/parity coordinates along fibres in the
trivial vector bundle $\Pi\xi\colon M^n\times\Pi\fg\to M^n$.
We now let all the four \emph{neighbours} of the Lie algebra $\fg$ appear on 
the stage
:
they are $\fg$ (in $\chi$), $\fg^*$, $\Pi\fg$ (in $\Pi\xi$), and $\Pi\fg^*$ 
(see~\cite{Voronov2002} and reference therein).
Let us consider the bundle $\Pi\chi^*\colon \D_1(M^n)\otimes \Pi\fg^*\to M^n$
whose fibres are dual to those in $\chi$ and also have the parity 
reversed.\footnote{In terms of~\cite{AKZS}, the Whitney sum $J^\infty(\chi)
\mathbin{\times_{M^n}} J^\infty(\Pi\chi^*)$ plays the r\^{o}le of $\Pi T^* X$ for a 
$G$-manifold~$X$; here $\fg$ is the Lie algebra  of a Lie group $G$ so that 
$\Pi\fg\simeq \Pi TG /G$.}
We denote by $\alpha^*$ the collection of odd fibre coordinates in $\Pi\chi^*$.

\begin{rem}\label{remOrder}
In what follows we do not write the (indexes for) bases of vectors in the fibres 
of  $\D_1(M^n)$ or of covectors in $\Lambda^1(M^n)$; to make the notation short,
their couplings are implicit.
Nevertheless, a summation over such ``invisible'' indexes 
in $\partial/\partial x^\mu$ and $\Id x^\nu$ is present in all formulas containing the couplings of $\alpha$ and $\alpha^*$.
We also note that $(\alpha^*)\,\overleftarrow{\Id\lefteqn{{}_h}}\phantom{{}_h}$ is a 
very interesting object because $\alpha^*$ parametrizes fibres in $\D_1(M^n)\otimes\Pi\fg^*$; 
the horizontal differential $\Id_h$
produces the forms $\Id x^i$ which are initially not 
coupled with their duals from $\D_1(M^n)$. (However, such objects cancel out in the identity $\what{Q}^2=0$, see~\eqref{eqCancelObjects}
on p.~\pageref{eqCancelObjects}.)
\end{rem}

Secondly, we consider the even\/-\/parity dual $\xi^*\colon M^n\times \fg^* \to M^n$ of 
the odd bundle $\Pi\xi$; let us denote by $b^*$ the coordinates along $\fg^*$ in the 
fibres of $\xi^*$.

Finally, we fix the ordering
\begin{equation}\label{eqFixSignsBV}
\delta\alpha \wedge \delta\alpha^* + \delta b^* \wedge \delta b
\end{equation}
of the canonically conjugate pairs of coordinates. 
By picking a volume form $\dvol(M^n)$ on the base $M^n$ we then construct
the odd Poisson bracket (variational Schouten bracket~$\lshad\,,\,\rshad$) on the senior 
$\Id_h$-\/cohomology (or \emph{horizontal} cohomology) 
space $\overline{H}^n(\chi\mathbin{\times_{M^n}}\Pi\chi^*\mathbin{\times_{M^n}}
\Pi\xi\mathbin{\times_{M^n}}\xi^*)$;
we refer to~\cite{gvbv,Dubna13} 
for a geometric theory of variations.

\begin{theor}\label{Th34}
The structure of non\/-\/Abelian variational Lie algebroid from Theorem~\textup{\ref{propNonAbelianQ}} 
is encoded on the Whitney sum $J^\infty(\chi\mathbin{{\times}_{M^n}}\Pi\chi^*\mathbin{
{\times}_{M^n}}\Pi\xi\mathbin{{\times}_{M^n}}\xi^*)$ 
of infinite jet \textup{(}super\textup{)}\/bundles by the action functional
\begin{equation*}
 \what{S} = \int \dvol(M^n) \left\{ \langle \alpha^*, [b, \alpha] + \Id_h (b) \rangle
    + \tfrac12\langle b^*, [b,b] \rangle \right\} \in 
    \overline{H}^n(\chi\mathbin{{\times}_{M^n}}\Pi\chi^*\mathbin{{\times}_{M^n}}
\Pi\xi\mathbin{{\times}_{M^n}}\xi^*)
\end{equation*}
which satisfies the classical master\/-\/equation
\[
 \lshad \what{S}, \what{S} \rshad = 0. 
\]
The functional~$\widehat{S}$ is the Hamiltonian of odd\/-\/parity evolutionary vector field~$\what{Q}$
which is defined on $J^\infty(\chi)\mathbin{{\times}_{M^n}} J^\infty(\Pi\chi^*)
\mathbin{{\times}_{M^n}} J^\infty(\Pi\xi)\mathbin{{\times}_{M^n}} J^\infty(\xi^*)$ 
by the equality
\begin{equation}\label{eqDefQHat}
 \what{Q}(\cH) \cong \lshad \what{S}, \cH \rshad
\end{equation}
for any $\cH\in\overline{H}^n(\chi\mathbin{\times_{M^n}}\Pi\chi^*\mathbin{\times_{M^n}}\Pi\xi\mathbin{\times_{M^n}}\xi^*)$.
The odd\/-\/parity field is\footnote{The referee points out that the evolutionary vector 
field~$\widehat{Q}$ is the jet\/-\/bundle upgrade of the \emph{cotangent lift} of the
field~$Q$, which is revealed by the explicit formula for the Hamiltonian~$\widehat{S}$.
Let us recall that the cotangent lift of a vector field 
${\mathcal{Q}}={\mathcal{Q}}^i\,\dd/\dd q^i$ on a (super)\/manifold~$N^m$ is the 
Hamiltonian vector field on~$T^*N^m$ given by $\widehat{{\mathcal{Q}}}={\mathcal{Q}}^i(q)\,
\dd/\dd q^i - p_j\cdot\dd{\mathcal{Q}}^j(q)/\dd q^i\,\dd/\dd p_i$; its Hamiltonian 
is~${\mathcal{S}}=p_i\,{\mathcal{Q}}^i(q)$. An example of this classical construction
is contained in the seminal paper~\cite{AKZS}.}
\begin{equation}\label{EqQHat}
  \what{Q} = \revd^{\,(\alpha)}_{[b, \alpha] + \Id_h(b)}
    + \revd^{\,(\alpha^*)}_{ (\alpha^*)\overleftarrow{\ad}^*_b}
    + \tfrac12\revd^{\,(b)}_{[b,b]}
    + \revd^{\,(b^*)}_{-\ad^*_\alpha(\alpha^*) + (\alpha^*)\overleftarrow{\Id\lefteqn{{}_h}}\phantom{{}_h} 
      + \ad^*_b(b^*) },
\end{equation}
where $\langle (\alpha^*)\,\overleftarrow{\ad}^*_b, \alpha \rangle 
  \mathrel{\stackrel{\text{\textup{def}}}{=}} \langle \alpha^*, [b, \alpha] \rangle$
and $\langle \ad^*_b (b^*), p \rangle = \langle b^*, [b, p] \rangle$ for any
$\alpha \in \Gamma(\chi)$ and $p \in \Gamma(\xi)$.
This 
evolutionary vector field is homological,
\begin{equation*}
  \what{Q}^2 = 0.
\end{equation*}
\end{theor}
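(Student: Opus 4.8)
The plan is to exploit the structure, singled out in the referee's footnote, that $\what{S}$ is the (variational, jet\/-\/bundle) cotangent lift of the homological field~$Q$ from Theorem~\ref{propNonAbelianQ}. Writing the two generating sections of~$Q$ as $Q(\alpha)=[b,\alpha]+\Id_h b$ and $Q(b)=\tfrac12[b,b]$, one reads off that
\[
\what{S}=\int\dvol(M^n)\,\bigl\{\langle\alpha^*,Q(\alpha)\rangle+\langle b^*,Q(b)\rangle\bigr\},
\]
so that $\what{S}$ couples each antifield with the $Q$\/-\/image of the corresponding field. I would take this as the organizing principle and then verify the three assertions --\,the formula~\eqref{EqQHat}, the master\/-\/equation, and $\what{Q}^2=0$\,-- one at a time.

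First I would derive~\eqref{EqQHat} by computing the four variational derivatives of~$\what{S}$ and assembling $\what{Q}(\cdot)\cong\lshad\what{S},\cdot\rshad$ according to the fixed ordering~\eqref{eqFixSignsBV} of conjugate pairs. The components along~$\alpha$ and along~$b$ reproduce~$Q$ verbatim, since $\delta\what{S}/\delta\alpha^*=Q(\alpha)$ and $\delta\what{S}/\delta b^*=Q(b)$. The components along~$\alpha^*$ and along~$b^*$ are the genuinely new ``cotangent'' terms: varying $\langle\alpha^*,[b,\alpha]\rangle$ in~$\alpha$ and in~$b$ produces the adjoints $(\alpha^*)\overleftarrow{\ad}^*_b$ and $-\ad^*_\alpha(\alpha^*)$ through the defining relations quoted in the statement, while varying $\langle b^*,[b,b]\rangle$ gives $\ad^*_b(b^*)$. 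The only place where the equality holds merely up to integration by parts (hence the~$\cong$) is the term $\langle\alpha^*,\Id_h b\rangle$: transposing $\Id_h$ off~$b$ onto~$\alpha^*$ creates the object $(\alpha^*)\overleftarrow{\Id\lefteqn{{}_h}}\phantom{{}_h}$, which, as warned in Remark~\ref{remOrder}, drags along uncontracted one\/-\/forms~$\Id x^i$.

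Next I would establish the master\/-\/equation $\lshad\what{S},\what{S}\rshad=0$. Since $\what{S}$ is the cotangent\/-\/lift Hamiltonian of~$Q$, the self\/-\/bracket collapses --\,after the same integrations by parts\,-- to the pairing of the antifields with the generating sections of~$Q^2$,
\[
\lshad\what{S},\what{S}\rshad\cong\int\dvol(M^n)\,\bigl\{\langle\alpha^*,(Q^2)(\alpha)\rangle+\langle b^*,(Q^2)(b)\rangle\bigr\},
\]
and each section vanishes identically by Theorem~\ref{propNonAbelianQ}. Concretely the cancellations are powered by the $\fg$\/-\/Jacobi identity, the $\ad$\/-\/invariance of~$t_{ij}$ (equivalently, the defining relations for $\ad^*_b$ and $\overleftarrow{\ad}^*_b$), and $\Id_h\circ\Id_h=0$; this is also the point at which the spurious~$\Id x^i$ carried by $(\alpha^*)\overleftarrow{\Id\lefteqn{{}_h}}\phantom{{}_h}$ must --\,and do\,-- cancel, the cancellation promised by~\eqref{eqCancelObjects}. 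Then $\what{Q}^2=0$ follows formally from the master\/-\/equation together with the graded Jacobi identity of the variational Schouten bracket: since $\what{S}$ is parity\/-\/even, \eqref{eqDefQHat} yields
\[
\what{Q}^2(\cH)\cong\lshad\what{S},\lshad\what{S},\cH\rshad\rshad=\tfrac12\lshad\lshad\what{S},\what{S}\rshad,\cH\rshad=0.
\]

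The main obstacle is not the Lie algebra of~$\fg$ but the variational bookkeeping. In the jet\/-\/bundle setting the bracket~$\lshad\,,\,\rshad$ lives only on $\overline{H}^n(\cdot)$, so that every identity above is an equality modulo $\Id_h$\/-\/exact terms, and the Leibniz structure that makes the finite\/-\/dimensional cotangent lift immediate is partly lost (cf.\ Appendix~\ref{AppClassics}). One must therefore check explicitly that the uncontracted~$\Id x^i$ generated by $(\alpha^*)\overleftarrow{\Id\lefteqn{{}_h}}\phantom{{}_h}$ never survive in~$\what{Q}^2$, and that the graded Jacobi identity for~$\lshad\,,\,\rshad$ is applied with the signs dictated by the ordering~\eqref{eqFixSignsBV}. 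Verifying these cancellations directly --\,rather than inheriting them from the classical construction of~\cite{AKZS}\,-- is the heart of the proof.
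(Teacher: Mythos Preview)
Your approach is correct and takes a genuinely different route from the paper's proof.

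The paper proves $\what{Q}^2=0$ \emph{directly}, computing all four components of the generating section of~$\what{Q}^2$. The $\alpha$- and $b$-components reduce to Theorem~\ref{propNonAbelianQ}, as you note; the paper then carries out a substantial fresh calculation of the $\alpha^*$- and $b^*$-components, invoking the Jacobi identity several times and verifying the delicate cancellation~\eqref{eqCancelObjects} of the uncontracted $\Id x^i$ carried by $(\alpha^*)\overleftarrow{\Id\lefteqn{{}_h}}\phantom{{}_h}$. Only after $\what{Q}^2=0$ is established does the paper read off $\lshad\what{S},\what{S}\rshad=0$ from the equivalence via the graded Jacobi identity. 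Your route reverses this: because $\what{S}$ is linear in the antifields, $\lshad\what{S},\what{S}\rshad$ is again linear in~$(\alpha^*,b^*)$, and one integration by parts (undoing the transposition of~$\Id_h$ that produced $(\alpha^*)\overleftarrow{\Id\lefteqn{{}_h}}\phantom{{}_h}$) collapses it to the pairing of the antifields with the generating sections of~$Q^2$, which vanish by Theorem~\ref{propNonAbelianQ}. Then $\what{Q}^2=0$ follows formally. Your route buys economy --~the entire $\alpha^*$- and $b^*$-computation, including~\eqref{eqCancelObjects}, is bypassed~-- at the cost of relying on the cotangent-lift principle surviving the passage from~\cite{AKZS} to jet bundles; the paper's route buys explicit, component-by-component verification that nothing goes wrong.

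One small correction: your invocation of~\eqref{eqCancelObjects} is misplaced. That cancellation is specific to the paper's direct computation of the $b^*$-component of~$\what{Q}^2$. In your route the spurious $\Id x^i$ never materialise in $\lshad\what{S},\what{S}\rshad$: the term $\langle(\alpha^*)\overleftarrow{\Id\lefteqn{{}_h}}\phantom{{}_h},Q(b)\rangle$ arising in the self-bracket is, after one further integration by parts, simply $-\langle\alpha^*,\Id_h Q(b)\rangle$, a perfectly well-formed contribution to $\langle\alpha^*,(Q^2)(\alpha)\rangle$. So your final paragraph overstates the residual difficulty of your own argument --~the ``heart of the proof'' on your route is just that single integration by parts, not the cancellation~\eqref{eqCancelObjects}.
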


\begin{proof}
In coordinates, the master\/-\/action $\what{S}=\int\widehat{\mathcal{L}}\,\dvol(M^n)$ is equal to
\begin{equation*}
\what{S} = \int \dvol (M^n) \left\{ \alpha^*_a(b^\mu
  c^a_{\mu\nu} \alpha^\nu + \Id_h(b^a)) + \tfrac12 b^*_\mu b^\beta
  c^\mu_{\beta\gamma}b^\gamma \right\};
\end{equation*}
here the summation over spatial degrees of freedom from the base $M^n$
in implicit in the horizontal differential $\Id_h$ and the respective
contractions with $\alpha^*$.
By the Jacobi identity for the variational Schouten bracket $\lshad\,,\,
\rshad$ (see~\cite{Dubna13
}), the classical master equation
$\lshad \what{S}, \what{S} \rshad = 0$ is equivalent to the homological
condition $\what{Q}^2 = 0$ for the odd-parity vector field defined
by~\eqref{eqDefQHat}.
The conventional choice of signs~\eqref{eqFixSignsBV} yields a
formula for this graded derivation,
\begin{equation*}
\what{Q} = \revd^{\,(\alpha)}_{-\rdelta \what{\cL}/\delta \alpha^*}
  + \revd^{\,(\alpha^*)}_{\rdelta \what{\cL}/\delta \alpha}
  + \revd^{\,(b)}_{\rdelta \what{\cL}/\delta b^*}
  + \revd^{\,(b^*)}_{-\rdelta \what{\cL}/\delta b},
\end{equation*}
where the arrows over $\revd$ and $\rdelta$ indicate the direction
along which the graded derivations act and graded variations are
transported (that is, from left to right and rightmost, respectively).
We explicitly obtain that\footnote{Note that $\bigl\langle\alpha^*,\overrightarrow{\Id\lefteqn{{}_h}}\phantom{{}_h}(b)\bigr\rangle\cong
-\bigl\langle(\alpha^*)\overleftarrow{\Id\lefteqn{{}_h}}\phantom{{}_h},b\bigr\rangle$ in the course of integration by parts, whence the term $
(\alpha^*_\mu)\overleftarrow{\Id\lefteqn{{}_h}}\phantom{{}_h}$ that comes from $-\rdelta \what{\cL}/\delta b^\mu$ does stand with a plus sign in the velocity of~$b^*_\mu$.}
\begin{equation*}
  \what{Q} = \revd^{\,(\alpha^a)}_{b^\mu c^a_{\mu\nu}\alpha^\nu + \Id_h(b^a)}
    + \revd^{\,(\alpha^*_\nu)}_{\alpha^*_a b^\mu c^a_{\mu\nu}}
    + \revd^{\,(b^\mu)}_{\frac12 b^\beta c^\mu_{\beta\gamma} b^\gamma}
    + \revd^{\,(b^*_\mu)}_{ \left\{ 
        -\alpha^*_a c^a_{\mu\nu} \alpha^\nu 
+ (\alpha^*_\mu)\overleftarrow{\Id\lefteqn{{}_h}}\phantom{{}_h} + b^*_a c^a_{\mu\nu} b^\nu
      \right\}}.
\end{equation*}
Actually, the proof of Theorem~\ref{propNonAbelianQ} contains the
first half of a reasoning which shows why $\what{Q}^2 =0 $. (It is
clear that the field $\what{Q}$  consists of~\eqref{eqQBRST} not
depending on $\alpha^*$ and $b^*$ and of the two new terms.)
Again, the anticommutator $[\what{Q}, \what{Q}] = 2\what{Q}^2$ is an
evolutionary vector field.
We claim that the coefficients of $\vec{\dd}/\dd \alpha^*_\nu$ and $\vec{\dd}/
\dd b^*_\mu$ in it are equal to zero.

Let us consider first the coefficient of $\vec{\dd}/\dd\alpha^*$ at the
bottom of the evolutionary derivation $\revd^{\,(\alpha^*)}_{\{\ldots\}}$ in
$\what{Q}^2$; by contracting this coefficient with $\alpha =
(\alpha^\nu)$ we obtain
\begin{equation*}
  \langle \alpha^*_a, b^\lambda c^a_{\lambda \mu} b^q c^\mu_{q\nu}
    \alpha^\nu
    - \tfrac12 b^\beta c^\mu_{\beta \gamma} b^\gamma c^{a}_{\mu\nu}
    \alpha^\nu
  \rangle. 
\end{equation*}
It is readily seen that $\alpha^*$ is here coupled with the bi\/-\/linear
skew\/-\/symmetric operator \label{pProofTh2}
$\Gamma(\xi)\times\Gamma(\xi) \to
\Gamma(\chi)$ for any fixed $\alpha\in\Gamma(\chi)$, and we show that
this operator is zero on its domain of definition.
Indeed, the comultiple  $|\,\rangle$ of $\langle \alpha^* |$ is $[b,
[b, \alpha]] - \frac12 [[b, b], \alpha]$ so that its value at any
arguments $p_1,p_2\in\Gamma(\xi)$ equals
\begin{equation*}
  [p_1, [p_2, \alpha]] - [p_2, [p_1, \alpha]] 
  - [ \tfrac12[p_1, p_2] - \tfrac12[p_2, p_1], \alpha] = 0
\end{equation*}
by the Jacobi identity.

Let us now consider the coefficient of $\vec{\dd}/\dd b^*_\mu$ 
in the vector field~$\what{Q}^2$,
\begin{multline*}
-\left[\alpha^*_{\widetilde{a}} b^{\widetilde{\mu}} c^{\widetilde{a}}_{\widetilde{\mu}a}\right]\,c^a_{\mu\nu}\alpha^\nu  
+\alpha^*_a c^a_{\mu\nu}\left[b^{\widetilde{\mu}} c^\nu_{\widetilde{\mu}\widetilde{\nu}}\alpha^{\widetilde{\nu}} + \Id_h(b^\nu)\right]
+\left(\left[\alpha^*_{\widetilde{a}} b^{\widetilde{\mu}} c^{\widetilde{a}}_{\widetilde{\mu}\mu}\right]\right)\overleftarrow{\Id\lefteqn{{}_h}}\phantom{{}_h}
\\
+\left[-\alpha^*_{\widetilde{a}} c^{\widetilde{a}}_{a\widetilde{\nu}} \alpha^{\widetilde{\nu}} + (\alpha^*_a)\overleftarrow{\Id\lefteqn{{}_h}}\phantom{{}_h} + b^*_{\widetilde{a}} c^{\widetilde{a}}_{a\widetilde{\nu}} b^{\widetilde{\nu}}\right]\, c^a_{\mu\nu} b^\nu
+b^*_a c^a_{\mu\nu}\cdot\left[\tfrac{1}{2} b^{\widetilde{\beta}} c^\nu_{\widetilde{\beta}\widetilde{\gamma}} b^{\widetilde{\gamma}}\right];
\end{multline*}
here we mark with a tilde sign those summation indexes which come from the first copy of~$\what{Q}$ 
acting from the left on $\vec{\dd}^{\,(b^*_\mu)}_{\{\ldots\}}$ in~$\what{Q}\circ\what{Q}$.
Two pairs of cancellations occur in the terms which contain the
horizontal differential~$\Id_h$. 
First, let us consider the terms in which the differential acts on~$\alpha^*$.
By contracting the index~$\mu$ with an extra copy $b=(b^\mu)$, we obtain
\begin{equation}\label{eqCancelObjects}
(\alpha^*_a)\overleftarrow{\Id\lefteqn{{}_h}}\phantom{{}_h} \,b^\lambda c^a_{\lambda\mu} b^\mu
  + (\alpha^*_a)\overleftarrow{\Id\lefteqn{{}_h}}\phantom{{}_h} c^a_{\mu\lambda} b^\lambda b^\mu.
\end{equation}
Due to the skew\/-\/symmetry of structure constants
$c^k_{ij}$ in $\fg$, at any sections $p_1,p_2\in\Gamma(\xi)$ we have that
\begin{equation*}
(\alpha^*_a)\overleftarrow{\Id\lefteqn{{}_h}}\phantom{{}_h}\cdot\left(
    p^\lambda_1 c^a_{\lambda\mu} p^\mu_2 
    - p^\lambda_2 c^a_{\lambda\mu} p^\mu_1
    + c^a_{\mu\lambda}p^\lambda_1 p^\mu_2
    - c^a_{\mu\lambda}p^\lambda_2 p^\mu_1
  \right) = 0.
\end{equation*}
Likewise, a contraction with $b=(b^\mu)$ for the 
other pair of terms with $\Id_h$, now acting on~$b$, yields
\begin{equation}\label{eqCancelObjectsAgain}
  \alpha^*_a\, c^a_{\mu\lambda}\, \Id_h(b^\lambda) b^\mu
  + \alpha^*_a\, \Id_h(b^\lambda)\, c^a_{\lambda\mu} b^\mu.
\end{equation}
At the moment of evaluation at~$p_1$ and~$p_2$, expression~\eqref{eqCancelObjectsAgain} cancels out due to the same mechanism as above.

The remaining part of the coefficient of $\vec{\dd}/\dd b^*_\mu$ in
$\what{Q}^2$ is 
\begin{multline}\label{eqTwoLines}
  - \alpha^*_z b^\lambda c^z_{\lambda a} c^a_{\mu\nu} \alpha^\nu
  + \alpha^*_z c^z_{\mu\nu} b^i c^\nu_{ij} \alpha^j 
  - \alpha^*_z c^z_{a \nu} \alpha^\nu c^a_{\mu j} b^j \\
  + b^*_\lambda c^\lambda_{a \gamma} b^\gamma c^a_{\mu j} b^j
  + b^*_{\lambda} c^\lambda_{\mu\gamma} 
    \cdot \tfrac12 b^\beta c^\gamma_{\beta \delta} b^\delta.
\end{multline}
It is obvious that the mechanisms of vanishing are different for the
first and second lines in~\eqref{eqTwoLines} whenever each of the two
is regarded as mapping which takes $b = (b^\mu)$ to a number from the field~$\Bbbk$.
Therefore, let us consider these two lines separately.

By contracting the upper line of~\eqref{eqTwoLines} with $b=(b^\mu)$,
we rewrite it as follows,
\[
\langle - \alpha^*_z, b^\lambda c^z_{\lambda a} c^a_{\mu \nu}
  \alpha^\nu b^\mu  
- c^z_{\mu\nu}b^i c^\nu_{ij}\alpha^jb^\mu
+ c^z_{a\nu} \alpha^\nu c^a_{\mu j} b^j b^\mu \rangle.
\]
Viewing the content of the co\/-\/multiple $|\,\rangle$ of $\langle
-\alpha^*|$ as bi\/-\/linear skew\/-\/symmetric mapping
$\Gamma(\xi)\times\Gamma(\xi) \to \Gamma(\chi)$, we conclude that its
value at any pair of section $p_1, p_2 \in \Gamma(\xi)$ is
\begin{align*}
{}\phantom{+} {}& [p_2, [p_1, \alpha]] - [ p_1, [p_2, \alpha]] 
  + [[p_1,  p_2], \alpha] \\
{}-{}& [p_1, [p_2, \alpha]] + [p_2, [p_1, \alpha]]
  - [[p_2, p_1], \alpha] = 0 - 0 = 0,
\end{align*}
because each line itself amounts to the Jacobi identity.

At the same time, the contraction of lower line in~\eqref{eqTwoLines} with
$b=(b^\mu)$ gives
\[
 \langle
   b^*_\lambda,
   c^\lambda_{a\gamma} b^\gamma c^a_{\mu j} b^j b^\mu
   + c^\lambda_{\mu\gamma} \cdot
     \tfrac12 b^\beta c^\gamma_{\beta \delta} b^\delta b^\mu
 \rangle.
\]
The term $|\,\rangle$ near $\langle b^*|$ determines the tri\/-\/linear
skew\/-\/symmetric mapping $\Gamma(\xi)\times\Gamma(\xi)\times\Gamma(\xi)
\to \Gamma(\xi)$ whose value at any $p_1, p_2, p_3 \in \Gamma(\xi)$
is defined by the formula
\[
\sum_{\sigma\in S_3} (-)^{\sigma}\Bigl\{
  \bigl[[ p_{\sigma(1)}, p_{\sigma(2)}], p_{\sigma(3)} \bigr]
  + \bigl[ p_{\sigma(1)}, \tfrac12 [ p_{\sigma(2)}, p_{\sigma(3)}]\bigr]
\Bigr\}.
\]
This amounts to four copies of the Jacobi identity (indeed, let us
take separate sums over even and odd permutations).
Consequently, the tri\/-\/linear operator at hand, hence the entire
coefficient of $\vec{\dd}/\dd b^*$, is equal to zero so that $\what{Q}^2=0$.
\end{proof}

\section{Gauge automorphisms of the $\widehat{Q}$-\/cohomology groups}
\noindent%
We finally describe the next generation of Lie algebroids; they arise from 
infinitesimal gauge symmetries of the quantum master\/-\/equation~\eqref{QME}
or its limit 
$\lshad \what{S},\what{S}\rshad = 0$ as $\hbar\to0$. The 
construction of infinitesimal gauge automorphisms
illustrates general principles of 
theory of differential graded Lie-{} 
or L${}_\infty$-\/algebras
(see~\cite{AKZS,KontsevichSoibelman} and~\cite{KKIgonin2003}). 

\begin{theor}\label{Th3}
An infinitesimal shift $\smash{\what{S}}\mapsto \smash{\what{S}}(\veps) = \smash{\what{S}} + \veps \lshad \smash{\what{S}},F \rshad
 + o(\veps)$, where $F$~is an odd\/-\/parity functional, 
is a gauge symmetry of the classical master\/-\/equation
$\lshad \smash{\what{S}},\smash{\what{S}}\rshad=0$.
A simultaneous shift $\eta \mapsto \eta(\veps) = \eta + \veps \lshad
\eta, F \rshad+o(\veps)$ of all functionals 
$\eta\in\overline{H}^n 
(\chi\mathbin{{\times}_{M^n}}\Pi\chi^*\mathbin{{\times}_{M^n}}\Pi\xi\mathbin{{\times}_{M^n}}\xi^*)$,
but not of the generator~$F$ itself,
preserves the structure of $\what{Q}$-\/cohomology classes.
\end{theor}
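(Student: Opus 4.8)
The plan is to reduce both assertions to two structural facts already at our disposal: the graded symmetry and graded Jacobi identity of the variational Schouten bracket $\lshad\,,\,\rshad$, and the equivalence established in Theorem~\ref{Th34} between the classical master-equation $\lshad\what{S},\what{S}\rshad=0$ and the homological property $\what{Q}^2=0$ of the derivation $\what{Q}=\lshad\what{S},\cdot\rshad$. I keep in mind that $\what{S}$ is parity-even, $F$ is parity-odd, and the bracket is parity-odd (it raises the total parity by one), and that all equalities below are understood in the senior horizontal cohomology $\overline{H}^n(\cdots)$ on which the bracket is defined.

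For the first claim I would substitute $\what{S}(\veps)=\what{S}+\veps\lshad\what{S},F\rshad+o(\veps)$ into $\lshad\what{S}(\veps),\what{S}(\veps)\rshad$ and expand to first order in~$\veps$. The zeroth-order term is $\lshad\what{S},\what{S}\rshad=0$ by hypothesis. The two first-order cross-terms are $\lshad\lshad\what{S},F\rshad,\what{S}\rshad$ and $\lshad\what{S},\lshad\what{S},F\rshad\rshad$; since $\lshad\what{S},F\rshad$ is again parity-even, the graded symmetry of $\lshad\,,\,\rshad$ collapses them into $2\lshad\what{S},\lshad\what{S},F\rshad\rshad=2\what{Q}^2(F)$, which vanishes by Theorem~\ref{Th34}. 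Hence $\lshad\what{S}(\veps),\what{S}(\veps)\rshad=o(\veps)$, so the shift is an infinitesimal gauge symmetry of the classical master-equation.

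For the second claim I would show that the shift $\eta\mapsto\eta(\veps)=\eta+\veps\lshad\eta,F\rshad$ is a cochain map to first order, intertwining the original differential $\what{Q}=\lshad\what{S},\cdot\rshad$ with the shifted one $\what{Q}(\veps)=\lshad\what{S}(\veps),\cdot\rshad$. Expanding $\what{Q}(\veps)\bigl(\eta(\veps)\bigr)$ and $\bigl(\what{Q}\eta\bigr)(\veps)$ to order~$\veps$, their difference is
\[
\lshad\what{S},\lshad\eta,F\rshad\rshad+\lshad\lshad\what{S},F\rshad,\eta\rshad-\lshad\lshad\what{S},\eta\rshad,F\rshad,
\]
which is a single instance of the graded Jacobi identity for the triple $(\what{S},\eta,F)$ and therefore vanishes. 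Consequently the shift sends $\what{Q}$-cocycles to $\what{Q}(\veps)$-cocycles and $\what{Q}$-coboundaries to $\what{Q}(\veps)$-coboundaries; being invertible to first order (its inverse is generated by~$-F$), it induces an isomorphism of the respective cohomology groups and so preserves the structure of $\what{Q}$-cohomology classes.

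The step I expect to demand the most care is the bookkeeping of Koszul signs. One must fix the grading convention so that $\lshad\,,\,\rshad$ genuinely raises parity by one, and then track the parities of $\what{S}$, $F$, and a general $\eta$ through the graded-symmetry step of the first claim (so that the two cross-terms \emph{add} rather than cancel) and through the Jacobi step of the second (so that the three terms appear with the signs that make them cancel). A secondary point worth emphasizing is that the generator~$F$ is \emph{not} transported; no compensating term from varying~$F$ enters, which is precisely what leaves the cohomology classes, and not merely the complex, invariant. Once the signs are pinned down, the remainder is a direct application of the two structural identities recalled at the outset.
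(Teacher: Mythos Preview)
Your proposal is correct and follows essentially the same route as the paper's own proof: both claims reduce to the graded Jacobi identity for $\lshad\,,\,\rshad$, together with the master\/-\/equation (equivalently, $\what{Q}^2=0$). The only notable difference is in packaging. For the first claim the paper rewrites $\lshad\what{S},\lshad\what{S},F\rshad\rshad=\tfrac12\lshad\lshad\what{S},\what{S}\rshad,F\rshad$ via Jacobi and then invokes $\lshad\what{S},\what{S}\rshad=0$, whereas you recognise the same quantity directly as $\what{Q}^2(F)$ and cite Theorem~\ref{Th34}; these are the same computation. For the second claim the paper treats cocycles and coboundaries separately, checking for each that the Jacobi identity closes the argument, while you compress both checks into the single statement that $\eta\mapsto\eta(\veps)$ is a cochain map intertwining $\what{Q}$ with $\what{Q}(\veps)$ to first order. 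Your formulation is slightly cleaner in that it applies to all $\eta$ at once (not only cocycles) and yields preservation of cocycles and coboundaries as immediate corollaries; the paper's version makes the two preservation statements individually explicit. Either way, the substance is one application of graded symmetry plus one of graded Jacobi, and your caution about Koszul signs is well placed.
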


\begin{proof}
Let $F$ be an odd\/-\/parity functional and perform
the infinitesimal shift 
$\what{S} \mapsto \what{S}+ \veps\lshad \what{S}, F \rshad  + o(\veps)$
of the Hamiltonian $\what{S}$ for the differential $\what{Q}$. We have
that
\[
\lshad \smash{\what{S}}(\veps), \smash{\what{S}}(\veps) \rshad
 = \lshad \smash{\what{S}, \what{S}} \rshad 
 + 2\veps \lshad \smash{\what{S}}, \lshad \smash{\what{S}}, F \rshad \rshad
 + o(\veps).
\]
By using the shifted\/-\/graded Jacobi identity for the variational
Schouten bracket~$\lshad\,,\,\rshad$ (see~\cite{Dubna13
}) we deduce that
\[
\lshad \smash{\what{S}, \lshad \what{S}, F \rshad} \rshad =
\smash{\tfrac12} \lshad \smash{\lshad \what{S}, \what{S} \rshad, F} \rshad,
\]
so that the infinitesimal shift is a symmetry of the classical 
master\/-\/equation $\lshad \smash{\what{S}, \what{S}} \rshad = 0$. 

Now let a functional
$\eta$ mark a $\what{Q}$-cohomology class, i.e., suppose $\lshad
\what{S}, \eta  \rshad = 0$. 
In the course of simultaneous evolution $\what{S} \mapsto
\what{S}(\veps)$ for the classical master-action and $\eta \mapsto
\eta(\veps)$ for $\what{Q}$-cohomology elements, the initial condition 
$\lshad \what{S}, \eta \rshad = 0$ at $\veps = 0$ evolves as fast as
\[
\lshad \lshad \what{S}, F \rshad, \eta \rshad 
 + \lshad \what{S}, \lshad \eta, F \rshad \rshad 
= \lshad \lshad \what{S}, \eta \rshad, F \rshad = 0
\]
due to the Jacobi identity and the cocycle condition itself. 
In other words, the $\what{Q}$-\/cocycles evolve to
$\what{Q}(\veps)$-\/cocycles.

At the same time, let \emph{all} functionals 
$h\in\overline{H}^n(\chi\mathbin{{\times}_{M^n}}\Pi\chi^*\mathbin{{\times}_{M^n}}
\Pi\xi\mathbin{{\times}_{M^n}}\xi^*)$
evolve by the law $h \mapsto h(\veps) = h + \veps\lshad h, F \rshad + o(\veps)$.
Consider two representatives, $\eta$ and $\eta + \lshad \what{S}, h
\rshad$, of the $\what{Q}$-\/cohomology class for a functional~$\eta$.
On one hand, the velocity of evolution of the $\what{Q}$-\/exact term 
$\lshad \what{S}, h \rshad$ is postulated to be 
$\lshad \lshad \what{S} , h \rshad, F \rshad$;\ we claim that the
infinitesimally shif\-ted functional $\lshad \what{S}, h \rshad
(\veps)$ remains $\what{Q}(\veps)$-\/exact.
Indeed, on the other hand we have that,\ knowing the change $\what{S}
\mapsto \what{S}(\veps)$ and $h \mapsto h(\veps)$,\ the exact term's
calculated velocity is
\[
\lshad \lshad \what{S}, F \rshad, h \rshad 
 + \lshad \what{S}, \lshad h, F \rshad \rshad 
 = \lshad \lshad \what{S}, h \rshad, F \rshad 
\]
(the Jacobi identity for $\lshad\,,\,\rshad$ works again and the assertion
is valid irrespective of the parity of $h$ whenever $F$ is parity\/-\/odd).
This shows that the postulated and calculated evolutions of
$\what{Q}$-\/exact terms coincide, whence $\what{Q}$-\/coboundaries become
$\what{Q}(\veps)$-\/coboundaries after the infinitesimal shift.
\qquad
We conclude that 
the structure of $\what{Q}$-\/cohomology group stays intact
under such transformations of the space of functionals.
\end{proof}

The above picture of gauge automorphisms is extended verbatim to the full quantum setup%
\footnote{%
The Batalin\/--\/Vilkovisky differential $\boldsymbol{\Omega}^{\hbar}$ stems from the Schwinger\/--\/Dyson condition of
effective independence --~of the ghost parity-odd degrees of freedom~-- for Feynman's path integrals of the observables\,;
in earnest, the condition expresses the intuitive property $\langle1\rangle=1$ of averaging with weight factor
$\exp\left(\tfrac{\boldsymbol{i}}{\hbar}S^{\hbar}\right)$, see~\cite{BV1981-83,HenneauxTeitelboim}
and~\cite{gvbv}.},
see Fig.~\ref{FigReiterate} and~\cite[\S\,3.2]{gvbv} for detail.
\begin{figure}[htb]
{\unitlength=1mm
\special{em:linewidth 0.4pt}
\linethickness{0.4pt}
\begin{picture}(111.00,50.00)(16,0)
\put(-3,5.00){\vector(1,0){33.00}}
\put(15.00,5.00){\circle*{1.00}}
\put(-3,5.67){\makebox(0,0)[lb]{$M^n$}}
\put(13.67,1.33){\makebox(0,0)[lb]{$x$}}
\put(-2.33,10.00){\line(0,1){40.00}}
\put(-2.33,50.00){\line(1,0){31.33}}
\put(29.00,50.00){\line(0,-1){40.00}}
\put(29.00,10.00){\line(-1,0){31.33}}
\put(-1.00,43.00){\makebox(0,0)[lb]{$\overline{H}^n(\chi\mathbin{{\times}_M}\Pi\chi^*$}}
\put(1,37.00){\makebox(0,0)[lb]{${}\mathbin{{\times}_M}\Pi\xi\mathbin{{\times}_M}\xi^*)$}}
\put(-1,27.00){\makebox(0,0)[lb]{$(\bbf\cdot)\circlearrowright\bar0\mid\bar1\ni F$}}
\put(2,13.5){\makebox(0,0)[lb]{$J^{\infty}$}}
\put(10,11){\line(1,0){18}}
\put(28,11){\line(0,1){9}}
\put(28,20){\line(-1,0){18}}
\put(10,20){\line(0,-1){9}}
\put(13.33,15.5){\makebox(0,0)[lb]{$\alpha,\ \alpha^*$}}
\put(13.33,11.67){\makebox(0,0)[lb]{$b,\ b^*$}}
\put(30.67,45.00){\makebox(0,0)[lb]{$\lshad\,,\,\rshad$}}
\put(0,0){\begin{picture}(111,50)(-5,0)
\put(30,26.67){\makebox(0,0)[lb]{$F$}}
\put(28.33,21.67){\makebox(0,0)[lb]{odd}}
\put(35.00,28.00){\vector(1,0){35.00}}
\put(35.00,29.67){\makebox(0,0)[lb]%
   {$\boldsymbol{\Omega}^{\hbar}{=}{-}\boldsymbol{i}\hbar\,\Delta{+}\lshad S^{\hbar},\,\cdot\,\rshad$}}
\put(72,25.33){\makebox(0,0)[lb]{$\tfrac{\Id}{\Id\veps_F}{S}^{\hbar}$}}
\end{picture}}
\put(0,0){\begin{picture}(111,50)(-15,0)
\put(77.33,10.00){\line(1,0){31.33}}
\put(108.67,10.00){\line(0,1){40.00}}
\put(108.67,50.00){\line(-1,0){31.33}}
\put(77.33,50.00){\line(0,-1){40.00}}
\put(77.00,5.00){\vector(1,0){33.00}}
\put(93.33,5.00){\circle*{1.00}}
\put(91.67,1.00){\makebox(0,0)[lb]{$x$}}
\put(77,5.67){\makebox(0,0)[lb]{$M^n$}}
\put(75.00,12.00){\vector(0,1){13.00}}
\put(75.00,34.00){\vector(0,1){13.00}}
\put(111.00,12.00){\vector(0,1){13.00}}
\put(111.00,34.00){\vector(0,1){13.00}}
\put(113.00,45.00){\makebox(0,0)[lb]{$\left[\tfrac{\Id}{\Id\veps_{F_1}},\tfrac{\Id}{\Id\veps_{F_2}}\right]$}}
\put(78.17,43){\makebox(0,0)[lb]{$\overline{H}^n(\chi\mathbin{{\times}_M}\Pi\chi^*$}}
\put(81,37){\makebox(0,0)[lb]{${}\mathbin{{\times}_M}\Pi\xi\mathbin{{\times}_M}\xi^*)$}}
\put(79.00,31){\makebox(0,0)[lb]{$\cdot$\ quantum}}
\put(82.00,28){\makebox(0,0)[lb]{master-action}}
\put(79,24){\makebox(0,0)[lb]{$\cdot$\ observables}}
\put(89.67,20){\line(1,0){18}}
\put(107.67,20){\line(0,-1){9}}
\put(107.67,11){\line(-1,0){18}}
\put(89.67,11){\line(0,1){9}}
\put(94,15.5){\makebox(0,0)[lb]{$\alpha,\ \alpha^*$}}
\put(94,11.67){\makebox(0,0)[lb]{$b,\ b^*$}}
\put(81.67,13.5){\makebox(0,0)[lb]{$J^{\infty}$}}
\end{picture}}
\end{picture}}
\caption{The next generation of Lie algebroids: gauge automorphisms of the (quantum)
BV-\/cohomology.}\label{FigReiterate}
\end{figure}
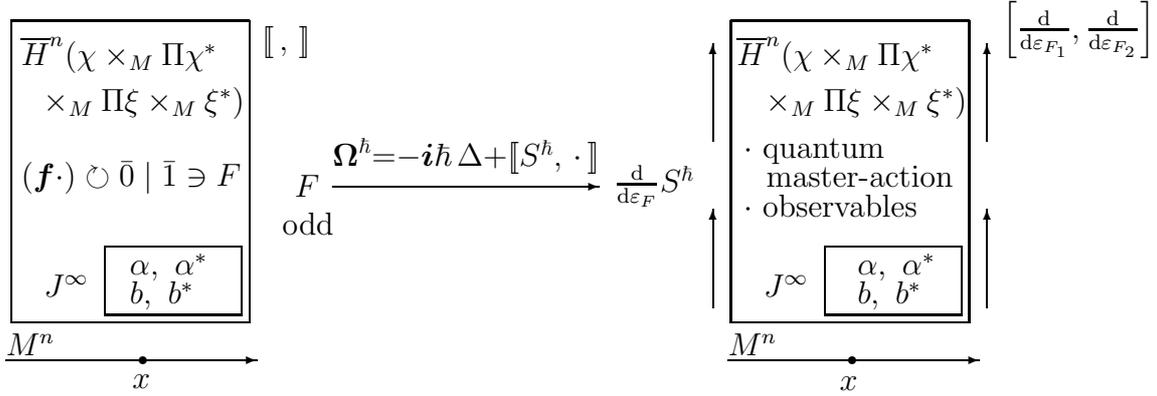
Ghost parity-odd functionals 
$F\in\ov{H}^n\left(\chi\mathbin{\times_{M^n}}\Pi\chi^*\mathbin{\times_{M^n}}\Pi\xi\mathbin{\times_{M^n}}\xi^*\right)$
are the generators of gauge transformations
$$\frac{\Id}{\Id\veps_F}S^{\hbar}=\boldsymbol{\Omega}^{\hbar}(F);$$
a parameter $\veps_F\in\BBR$ is (formally) associated with every odd functional $F$. The \emph{observables} $\bbf$ arise
through expansions $S^{\hbar}+\lambda\bbf+o(\lambda)$ of the quantum master-action\,; their evolution is given by the
coefficient
\[
\frac{\Id}{\Id\veps_F}\bbf=\lshad\bbf,F\rshad
\]
of $\lambda$ in the velocity of full action functional. It is clear also why the evolution of gauge generators $F$
--\,that belong to the domain of definition of $\boldsymbol{\Omega}^{\hbar}$ but not to its image\,--
is not discussed at all.

Let us recall from~\cite[\S\,3.2]{gvbv} and~\cite{Dubna13} that the commutator of two infinitesimal gauge transformations
with ghost parity-odd parameters, say $\cX$ and~$\cY$, is determined by the variational Schouten bracket of the two
generators\,:
\[
\left(\frac{\Id}{\Id\veps_{\cY}}\circ\frac{\Id}{\Id\veps_{\cX}}-
\frac{\Id}{\Id\veps_{\cX}}\circ\frac{\Id}{\Id\veps_{\cY}}\right)\,S^{\hbar}=
\boldsymbol{\Omega}^{\hbar}\bigl(\lshad\cX,\cY\rshad\bigr).
\]
Moreover, we discover that parity-even observables $\bbf$ play the r\^ole of ``functions'' in the world of formal products
of integral functionals. Namely, we have that
\[
\lshad\bbf\cdot\cX,\cY\rshad=\bbf\cdot\lshad\cX,\cY\rshad-(-)^{|\cX|\cdot|\cY|}\frac{\Id}{\Id\veps_{\cY}}(\bbf)\cdot\cX.
\]
In these terms, we recover the \emph{classical} notion of Lie algebroid~--- at the quantum level of horizontal cohomology
modulo $\mathop{\mathrm{im}}\Id_{h}$ in the variational setup; that classical concept is reviewed in Appendix~\ref{AppLieAlgd},
see Definition~\ref{DefLieAlgd} on p.\,\pageref{DefLieAlgd}.\ The new Lie algebroid is encoded~by
\begin{itemize}
\item
the parity\/-\/odd part of the superspace 
$\ov{H}^n(\chi\mathbin{\times_{M^n}}\Pi\chi^*\mathbin{\times_{M^n}}\Pi\xi\mathbin{\times_{M^n}}\xi^*)$
fibred over the infinite jet space for the Whitney sum of bundles 
(cf.\ Remark~\ref{RemNoExcess} on~p.\,\pageref{RemNoExcess});
\item
the quantum Batalin\/--\/Vilkovisky differential $\boldsymbol{\Omega}^{\hbar}$, which is the anchor\,;
\item
the Schouten bracket $\lshad\,,\,\rshad$, which is the Lie (super)algebra structure on the infinite-dimensional, parity-odd
homogeneity component of
$\ov{H}^n(\chi\mathbin{\times_{M^n}}\Pi\chi^*\mathbin{\times_{M^n}}\Pi\xi\mathbin{\times_{M^n}}\xi^*)$
containing the generators of gauge automorphisms in the quantum BV-model at hand.
\end{itemize}
\noindent
We see that the link between the BV-differential $\boldsymbol{\Omega}^{\hbar}$ and the classical Lie algebroid in 
Fig.\,\ref{FigReiterate} is exactly the same as the relationship between Marvan's operator $\boldsymbol{\dd}_{\alpha}$
and the non-Abelian variational Lie algebroid in Fig.\,\ref{FigNonAbelAlgd}.

Let us conclude this paper by posing an open problem of realization of the newly\/-\/built classical Lie algebroid via
the master\/-\/functional~$\EuScript{S}$ and Schouten bracket in the bi\/-\/graded, 
infinite\/-\/dimensional setup over the superbundle
of ghost parity\/-\/odd generators of gauge automorphisms (see Theorem~\ref{Th3}). We further the question to the problem of
deformation quantization in the geometry of that classical master\/-\/equation for~$\EuScript{S}$,
see~\cite{KontsevichFormality}. The difficulty which should be foreseen at once is that the
cohomological deformation technique (see~\cite{BV1981-83,KontsevichFormality} 
or~\cite{HenneauxTeitelboim,VerbKT} and references therein)
is known to be not always valid in the infinite dimension. A successful solution of the deformation quantization problem --~or its
(re-)\/iterations at higher levels, much along the lines of this paper~-- will yield the deformation parameter(s) which
would be different from~$\hbar$\,; for the Planck constant is engaged already in the picture. On the other hand, a
rigidity statement would show that there can be no deformation parameters beyond the Planck constant~$\hbar$.

\section*{Conclusion}
\noindent%
Let us sum up the geometries we are dealing with. We started with a
partial differential equation~$\cE$ for physical fields; it is possible
that $\cE$~itself was Euler\/--\/Lagrange\footnote{\label{FootEisELeq}%
  The class of admissible models is much wider than it may first seem;
  for example, the Korteweg\/--\/de Vries equation $w_t =
  -\frac12w_{xxx} + 3 ww_x$ is Euler\/--\/Lagrange with respect to the
  action functional $S_0 = \int \left\{\frac12 v_x v_t - \frac14
    v^2_{xx} - \frac12 v^3_x \right\} \Id x \wedge \Id t$ if one sets
  $w=v_x$. In absence of the model's own gauge group, its
  BV-\/realization shrinks but there remains gauge invariance in the
Maurer\/--\/Cartan equation.} 
and it could be gauge\/-\/invariant with respect to some Lie group. 
We then recalled the notion of $\fg$-\/valued zero\/-\/curvature
representations~$\alpha$ for~$\cE$; here $\fg$~is the Lie algebra of a 
given Lie group~$G$ and $\alpha$~is a flat connection's $1$-\/form in a
principal $G$-\/bundle over~$\cEinf$. By construction, this $\fg$-\/valued
horizontal form satisfies the Maurer\/--\/Cartan equation
\begin{equation}\tag{\ref{eqMC}}
\cE_{\text{MC}} = \bigl\{ \Id_h\alpha \doteq 
{\tfrac12} [\alpha, \alpha] \bigr\}
\end{equation}
by virtue of~$\cE$ and its differential consequences which constitute~$\cEinf$. 
System~\eqref{eqMC} is always gauge\/-\/invariant so that
there are linear Noether's identities~\eqref{eqNoetherId} between the
equations; if the base manifold~$M^n$ is three\/-\/dimensional, then the
Maurer\/--\/Cartan equation~$\cE_{\text{MC}}$ is Euler\/--\/Lagrange
with respect to action functional~\eqref{eqMCAction}. The main result
of this paper (see Theorem~\ref{Th34} on p.~\pageref{Th34}) is that
--\,whenever one takes not just the bundle~$\chi$ for $\fg$-\/valued
$1$-forms but the Whitney sum of four (infinite jet bundles over) vector
bundles with prototype fibers built from~$\fg$,\ $\Pi\fg$,\ $\fg^*$, 
and~$\Pi\fg^*$\,-- the gauge invariance in~\eqref{eqMC} is captured by
evolutionary vector field~\eqref{EqQHat} with Hamiltonian~$\what{S}$
that satisfies the classical master\/-\/equation~\cite{AKZS,FelderKazhdanCME12},
\begin{equation}\tag{\ref{EqCMEIntro}}
\cE_{\text{CME}} = \bigl\{
    \boldsymbol{i}\hbar\,\Delta\smash{\what{S}{\bigr|}_{\hbar=0}} = 
    \tfrac12\lshad \smash{\what{S},\what{S}} \rshad 
  \bigr\}.
\end{equation}
We notice that, by starting with the geometry of solutions to
Maurer\/--\/Cartan's equation~\eqref{eqMC}, we have constructed
another object in the category of differential graded Lie
algebras~\cite{KontsevichSoibelman}; namely, we arrive at a setup with
\emph{zero} differential $\smash{\boldsymbol{i}\hbar\,\Delta{\bigr|}_{\hbar=0}}$
and Lie (super-)\/algebra structure defined by
the variational Schouten bracket $\lshad\,,\,\rshad$. 
%
That geometry's genuine differential at $\hbar\neq0$ is given by the
Batalin\/--\/Vilkovisky Laplacian~$\Delta$ (see~\cite{BV1981-83}
and~\cite{gvbv} for its definition). 
Let us now examine whether the standard
BV-\/technique~(\cite{BV1981-83,HenneauxTeitelboim}, cf.~\cite{CattaneoMnevReshetikhin}) 
can be directly
applied to the case of zero\/-\/curvature representations, hence to
quantum inverse scattering
(\cite{SklyaninTakhtajanFaddeevTMF1979} and~\cite{Kostant1979},
also~\cite{DrinfeldICM86,
FaddeevBook1986}).

\enlargethispage{0.7\baselineskip}
It is obvious that the equations of motion~$\cE$ upon physical fields
$u = \phi(x)$ co\/-\/exist with the Maurer\/-\/Cartan equations satisfied
by zero\/-\/curvature representations~$\alpha$. 
The geometries of
non\/-\/Abelian variational Lie algebroids and gauge
algebroids~\cite{Barnich2010,Praha2011} are two manifestations of the
same construction; let us stress that the respective gauge groups can
be unrelated: there is the Lie group~$G$ for $\fg$-\/valued
zero\/-\/curvature representations~$\alpha$ and, on the other hand, there
is a gauge group (if any, see footnote~\ref{FootEisELeq}%
) for physical fields and their equations of
motion $\cE = \left\{ \delta S_0 / \delta u = 0 \right\}$.

We recalled in section~\ref{secNoetherIdMC} that the
Maurer\/--\/Cartan equation~$\cE_{\text{MC}}$ itself is
Euler\/--\/Lagrange with respect to functional~\eqref{eqMCAction} in
the class of bundles over threefolds,
cf.~\cite{TownsendAchucarro1986,AKZS,Witten1988}. 
One obtains the Batalin\/--\/Vilkovisky action by extending the geometry of
zero\/-\/curvature representations in order to capture Noether's
identities~\eqref{eqNoetherId}. It is readily seen that the required
set of Darboux variables consists of
\begin{itemize}
\item the coordinates~$\cF$ along fibres in the bundle
  $\fg^*\otimes\Lambda^2(M^3)$ for the equations~$\cE_{\text{MC}}$,
\item the \emph{antifields}~$\cF^{\dag}$ for the bundle
  $\Pi\fg\otimes\Lambda^1(M^3)$ which is dual to the
  former and which has the opposite $\BBZ_2$-\/valued ghost parity,\footnote{%
The co\/-\/multiple $|\cF\rangle$ of a $\fg$-\/valued test shift $\langle\delta\alpha|$
with respect to the $\Lambda^3(M^3)$-\/valued coupling $\langle\,,\,\rangle$ refers to~$\fg^*$
at the level of Lie algebras (i.e., regardless of the ghost parity and regardless of
any tensor products with spaces of differential forms). This attributes the 
left\/-\/hand sides of Euler\/--\/Lagrange equations $\cE_{\text{MC}}$ with 
$\fg^*\otimes\Lambda^2(M^3)$. However,
we note that the pair of canonically conjugate variables would be
$\alpha$~for $\fg\otimes\Lambda^1(M^3)$ and $\alpha^\dag$~for
$\Pi\fg^*\otimes\Lambda^2(M^3)$ whenever the Maurer\/--\/Cartan 
equations~$\cE_{\text{MC}}$ are brute\/-\/force labelled by using the
respective unknowns, that is, if the metric tensor $t_{ij}$ is not taken into account
in the coupling $\langle\delta\alpha,\cF\rangle$.} 
and also
\item the \emph{antighosts}~$b^\dag$ along fibres of
  $\fg^*\otimes\Lambda^3(M^3)$ which reproduce
  syzygies~\eqref{eqNoetherId},
  as well as
\item the \emph{ghosts}~$b$ from the dual bundle $\Pi\fg\times M^3 \to M^3$.
\end{itemize}
The standard Koszul\/--\/Tate term in the Batalin\/--\/Vilkovisky
action is then $\langle b, \boldsymbol{\dd}_{\alpha}^\dag(\alpha^\dag)\rangle$: the
classical master\/-\/action for the entire model is then\footnote{We recall that 
the Koszul\/--\/Tate component of the full BV-\/differential~$\boldsymbol{D}_{\text{BV}}$
is addressed in~\cite{VerbKT} by using the language of infinite jet bundles~---
whereas it is the BRST-\/component of~$\boldsymbol{D}_{\text{BV}}$ which we focus on
in this paper.}
\[
  (S_0 + \langle \text{BV-terms} \rangle) 
   + (S_{\text{MC}} + \langle \text{Koszul-Tate} \rangle);
\]
the respective BV-\/differentials anticommute in the Whitney sum of the
two geometries for physical fields and flat connection~$\fg$-\/forms.

The point is that Maurer\/--\/Cartan's equation~\eqref{eqMC} is
Euler\/--\/Lagrange only if~$n=3$; however, the system~$\cE_{\text{MC}}$ 
remains gauge invariant at all~$n\geqslant 2$ but
the attribution of (anti)\/fields and (anti)\/ghosts to the bundles as
above becomes \textsl{ad hoc} if~$n\neq 3$. We therefore propose to
switch from the BV-\/approach to a picture which employs the four
neighbours~$\fg$,\ $\Pi\fg$,\ $\fg^*$, and~$\Pi\fg^*$ within the
master\/-\/action $\what{S}$.
This argument is supported by the following fact~\cite{IgoninUMN}:
let~$n\geqslant 3$ for~$M^n$, suppose $\cE$~is nonoverdetermined, 
and take a finite\/-\/dimensional Lie algebra~$\fg$,
then every $\fg$-\/valued zero\/-\/curvature representation~$\alpha$ for~$\cE$
is gauge equivalent to zero
(i.e., there exists $g\in C^\infty(\cEinf,G)$ such that $\alpha = \Id_h g
\cdot g^{-1}$).
It is remarkable that Marvan's homological technique, which
contributed with the anchor~$\boldsymbol{\dd}_{\alpha}$ to our construction of
non\/-\/Abelian variational Lie algebroids, was designed for effective
inspection of the spectral parameters' (non)\/removability 
at~$n=2$ but \emph{not} in the case of
higher dimensions~$n\geqslant3$ of the base~$M^n$.

\enlargethispage{0.7\baselineskip}
We conclude that the approach to quantisation of kinematically
integrable systems is not restricted by the BV-\/technique only; 
for one can choose between the former and, e.g., flat deformation of
(structures in) equation~\eqref{EqCMEIntro} to the quantum setup
of~\eqref{QME}. It would be interesting to pursue this alternative in detail
towards the construction of quantum groups~\cite{DrinfeldICM86} and 
approach of~\cite{Kostant1979,SklyaninTakhtajanFaddeevTMF1979} to quantum
inverse scattering and quantum integrable systems. This will be the
subject of another paper.

\subsection*{Discussion}\label{pDiscussion}
Non\/-\/Abelian variational Lie algebroids which we associate with the
geometry of $\fg$-valued zero-curvature representations are the
simplest examples of such structures in a sense that the bracket
$[\,,\,]_A$ on the anchor's domain is \emph{a priori} defined in each
case by the Lie algebra $\fg$. That linear bracket is independent of
either base points $x\in M^n$ or physical fields $\phi(x)$. Another
example of equal structural complexity is given by the gauge
algebroids in Yang\/--\/Mills theory~\cite{Barnich2010}. Indeed, the
bracket $[\,,\,]_A$ on the anchor's domain of definition is then
completely determined by the multiplication table of the structure
group for the Yang\/--\/Mills field. The case of variational Poisson
algebroids~\cite{JKGolovkoVerb2008TMPh,KiselevTMPh2011} is structurally more
complex: to determine the bi-differential bracket $[\,,\,]_A$ it
suffices to know the anchor $A$; however, the bracket can explicitly
depend on the (jets of) fields or on base points. The full generality
of variational Lie algebroids setup is achieved for 2D~Toda\/-\/like
systems or gauge theories beyond Yang\/--\/Mills (e.g., for gravity).
Therefore, the objects which we describe here mediate between the
Yang\/--\/Mills and Chern\/--\/Simons models.
It is remarkable that 
``reasonable'' Chern\/--\/Simons
models can 
in retrospect narrow the class of admissible base manifolds
$M^n$ for (gauge) field theories; for the quantum objects determine topological 
invariants of threefolds (e.g., via knot theory~\cite{TuraevBook1994,
Witten1989Jones}).
Here we also admit that a triviality of the boundary conditions
is assumed by default throughout this paper (see
footnote~\ref{FootNoBoundaty} on p.~\pageref{FootNoBoundaty} and
also~\cite{AKZS}).
This is of course a model situation; a selection of ``reasonable''
geometries could in principle overload the setup with non\/-\/vanishing
boundary terms.

\appendix
\section{Lie algebroids: an overview}\label{AppClassics}
\noindent%
For consistency, let us recall the standard construction of a Lie
algebroid over a usual smooth manifold~$N^m$. By definition (see
below) it is a vector bundle $\xi \colon \Omega^{d+m} \to N^{m}$ such
that the $C^{\infty}(N)$-module $\Gamma(\xi)$ of its sections is endowed
with a Lie algebra structure $[\,,\,]_{A}$ and with an anchor,
\begin{equation}\label{eqIsoMfld}
A\in \Mor(\xi, TN) \simeq \Hom_{C^{\infty}(N)}(\Gamma(\xi), \Gamma(TN)),
\end{equation}
which satisfies Leibniz rule~\eqref{eqLeibnizInAlgd} for
$[\,,\,]_{A}$. By introducing the odd neighbour $\Pi\xi\colon \Pi\Omega
\to N$ of the vector bundle $\xi$, one represents~\cite{Vaintrob} the
Lie algebroid over $N$ in terms of an odd-parity derivation $Q$ in the
ring
$C^{\infty}(\Pi\Omega)\simeq\Gamma(\bigwedge^{\bullet}\Omega^{\ast})$ of
smooth functions on the total space $\Pi\Omega$ of the new 
superbundle~$\Pi\xi$.

Let us indicate in advance the elements of the classical definition
which are irreparably lost as soon as the base manifold becomes the
total space of an infinite jet bundle $\pi_{\infty}\colon
  J^{\infty}(\pi)\to M^n$ for a given vector bundle $\pi$ over the new
base.\footnote{
To recognize the old manifold $N^m$ in this picture and to understand
where the new bundle $\pi$ over $M^n$ stems from, one could view $N^m$
as a fibre in a locally trivial fibre bundle $\pi$ over $M^n$, so that
the new anchor takes values in $\Gamma(\pi_{\infty}^{\ast}(T\pi))$ for
the bundle induced over $J^{\infty}(\pi)$ from the tangent $T\pi$ 
to~$\pi$. It is then readily seen that the classical construction
corresponds to the special case $n=0$ and $M^n=\{\text{pt}\}$
(equivalently, one sets $\Gamma(\pi)\simeq N^m$ so that only constant
section are allowed), see Fig.~\ref{FigOldNew}. 
However, in a generic situation of non\/-\/constant
smooth sections one encounters 
differential operators 
$A\colon \Gamma(\pi^{\ast}_{\infty}(\xi))\to
\Gamma(\pi^{\ast}_{\infty}(T\pi))$ for $\xi\colon \Omega^{n+d}\to
M^n$; likewise, the `functions' standing in coefficients of all object
become differential functions of arbitrary finite order 
on~$J^{\infty}(\pi)$.%
}
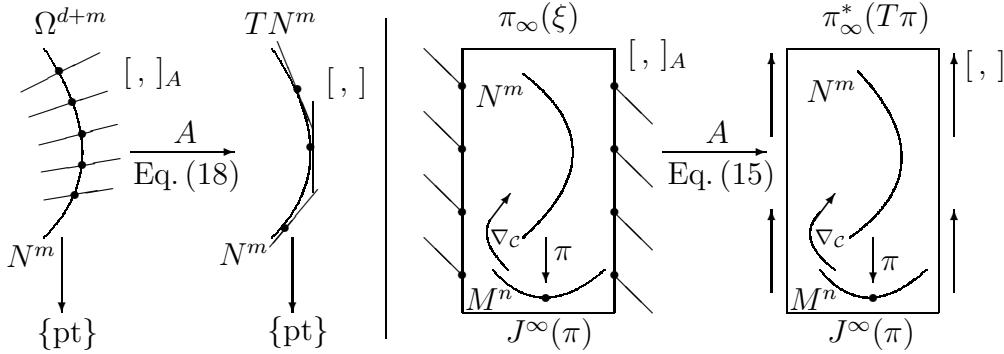
\begin{figure}[htb]
{\unitlength=1.00mm
\special{em:linewidth 0.4pt}
\linethickness{0.4pt}
\begin{picture}(130.00,44.00)
\bezier{128}(5.00,40.00)(15.00,25.00)(5.00,15.00)
\put(7.33,15.00){\vector(0,-1){10.00}}
\put(2.00,34.33){\line(2,1){10.33}}
\put(7.00,37.00){\circle*{1}}
\put(3.00,31.00){\line(3,1){10.33}}
\put(8.67,33.00){\circle*{1}}
\put(4.33,27.33){\line(4,1){10.33}}
\put(10.00,28.67){\circle*{1}}
\put(4.67,23.67){\line(6,1){10.33}}
\put(10,24.67){\circle*{1}}
\put(5.00,20.00){\line(6,1){9.00}}
\put(9,20.67){\circle*{1}}
\put(3.67,42){\makebox(0,0)[lb]{$\Omega^{d+m}$}}
\put(15.33,34.67){\makebox(0,0)[lb]{$[\,,\,]_A$}}
\put(0.00,11.00){\makebox(0,0)[lb]{$N^m$}}
\put(4,0.00){\makebox(0,0)[lb]{$\{\text{pt}\}$}}
\put(16.33,26.33){\vector(1,0){13.67}}
\put(22.00,27.5){\makebox(0,0)[lb]{$A$}}
\put(16.67,21){\makebox(0,0)[lb]{Eq.\,\eqref{eqLeibnizInAlgd}}}
\bezier{128}(35.00,40.00)(45.00,25.00)(35.00,15.00)
\put(35.67,41.00){\line(2,-5){4.67}}
\put(38.33,34.67){\circle*{1}}
\put(40.33,33.00){\line(0,-1){12.00}}
\put(40.00,27.00){\circle*{1}}
\put(41.00,21.33){\line(-5,-6){6.33}}
\put(36.67,16.33){\circle*{1}}
\put(31.33,42.00){\makebox(0,0)[lb]{$TN^m$}}
\put(42.00,33.33){\makebox(0,0)[lb]{$[\,,\,]$}}
\put(28,11.33){\makebox(0,0)[lb]{$N^m$}}
\put(37.67,15.00){\vector(0,-1){10.00}}
\put(34.33,0.33){\makebox(0,0)[lb]{$\{\text{pt}\}$}}
\put(50.00,44.00){\line(0,-1){42.33}}
\put(60.00,5.00){\line(0,1){35.00}}
\put(60.00,40.00){\line(1,0){20.00}}
\put(80.00,40.00){\line(0,-1){35.00}}
\put(80.00,5.00){\line(-1,0){20.00}}
\put(60.00,10.00){\line(-1,1){5.00}}
\put(60.00,35.00){\line(-1,1){5.00}}
\put(60.00,18.33){\line(-1,1){5.00}}
\put(60.00,26.67){\line(-1,1){5.00}}
\put(60.00,35.00){\circle*{1}}
\put(60.00,26.67){\circle*{1}}
\put(60.00,18.33){\circle*{1}}
\put(60.00,10.00){\circle*{1}}
\put(80.00,10.00){\line(1,-1){5.00}}
\put(80.00,18.33){\line(1,-1){5.00}}
\put(80.00,26.67){\line(1,-1){5.00}}
\put(80.00,35.00){\line(1,-1){5.00}}
\put(80.00,35.00){\circle*{1}}
\put(80.00,26.67){\circle*{1}}
\put(80.00,18.33){\circle*{1}}
\put(80.00,10.00){\circle*{1}}
\bezier{200}(68.00,36.67)(81,26.67)(68.00,15.00)
\bezier{88}(64,10.67)(70.33,3.33)(78.67,10.67)
\put(71,7){\circle*{1}}
\put(71,15){\vector(0,-1){6.00}}
\bezier{60}(66,10.67)(62,14.67)(63.67,17.33)
\put(63.67,17.33){\vector(3,4){2.67}}
\put(64.67,42.00){\makebox(0,0)[lb]{$\pi_{\infty}(\xi)$}}
\put(81.67,37.00){\makebox(0,0)[lb]{$[\,,\,]_A$}}
\put(61.67,32.33){\makebox(0,0)[lb]{$N^m$}}
\put(63.67,13.67){\makebox(0,0)[lb]{$\scriptstyle{\nabla_{\mathcal{C}}}$}}
\put(72.00,12.00){\makebox(0,0)[lb]{$\pi$}}
\put(60.33,5.33){\makebox(0,0)[lb]{$M^n$}}
\put(65.67,0){\makebox(0,0)[lb]{$J^{\infty}(\pi)$}}
\put(86.33,26.33){\vector(1,0){13.67}}
\put(92.00,27.5){\makebox(0,0)[lb]{$A$}}
\put(87.00,21){\makebox(0,0)[lb]{Eq.\,\eqref{EqDefFrob}}}
\put(102.67,5.00){\line(0,1){35.00}}
\put(102.67,40.00){\line(1,0){20}}
\put(122.67,40.00){\line(0,-1){35.00}}
\put(122.67,5.00){\line(-1,0){20}}
\put(100.67,7.67){\vector(0,1){11}}
\put(100.67,28.33){\vector(0,1){11}}
\put(124.67,7.67){\vector(0,1){11}}
\put(124.67,28.33){\vector(0,1){11}}
\bezier{200}(112.00,37.00)(124.00,24.33)(111.00,15.00)
\bezier{92}(107.00,10.67)(113.00,3.33)(121.00,10.67)
\put(114.00,7){\circle*{1}}
\put(114.00,15){\vector(0,-1){6}}
\bezier{60}(109.00,10.67)(103.67,14.67)(107.00,18.33)
\put(106.33,17.33){\vector(3,4){2.67}}
\put(107.33,42.00){\makebox(0,0)[lb]{$\pi^*_{\infty}(T\pi)$}}
\put(126.00,35.67){\makebox(0,0)[lb]{$[\,,\,]$}}
\put(105.25,33){\makebox(0,0)[lb]{$N^m$}}
\put(106.33,13.67){\makebox(0,0)[lb]{$\scriptstyle{\nabla_{\mathcal{C}}}$}}
\put(115.00,10.67){\makebox(0,0)[lb]{$\pi$}}
\put(103,5.33){\makebox(0,0)[lb]{$M^n$}}
\put(108.00,0){\makebox(0,0)[lb]{$J^{\infty}(\pi)$}}
\end{picture}}
\caption{From Lie algebroids $\bigl(\xi,A,[\,,\,]_A\bigr)$ to variational Lie algebroids
$\bigl(\pi_\infty^*(\xi),A,[\,,\,]_A\bigr)$.}\label{FigOldNew}
\end{figure}
The new anchor almost always becomes a positive order operator in
total derivatives; it takes values in the space of
$\pi_{\infty}$-vertical, evolutionary vector fields that preserve the
Cartan distribution on $J^{\infty}(\pi)$. But Newton's binomial
formula for the derivatives in $A$ prescribes that the old
identification $A(f\cdot\cX) = f\cdot A(\cX)$ of the
two module structures for $\Gamma(\Omega)\ni\cX$ and
$\Gamma(TN)$ is no longer valid (and isomorphism~\eqref{eqIsoMfld} is
lost). Simultaneously, Leibniz rule~\eqref{eqLeibnizInAlgd} is not valid,
e.g., even if one takes $A = \id$ for $\xi = \pi$.

To resolve the arising obstructions, for the new definition of a
\emph{variational} Lie algebroid over $J^{\infty}(\pi)$  we take the proven
Frobenius property, 
\begin{equation}\label{EqDefFrob}
[\img A, \img A] \subseteq \img A,
\end{equation}
of the anchor to be the Lie algebra homomorphism $\bigl(\Gamma\Omega,
[\,,\,]_{A}\bigr)\to\bigl(\Gamma(TN),[\,,\,]\bigr)$. In other words, we
postulate an implication but not the initial hypothesis of classical
construction. Such resolution was proposed in~\cite{KiselevTMPh2011}
for the (gra\-ded-)\/commutative setup of Poisson geometry on
$J^{\infty}(\pi)$ or for the geometry of 2D Toda-like systems and
BV-formalism for gauge-invariant models such as the Yang-Mills
equation (see~\cite{Praha2011} and also~\cite{Barnich2010} in which an
attempt to recognize the classical picture is made in a manifestly
jet-bundle setup). In this paper we show that the new approach is
equally well applicable in the non-Abelian case of Lie algebra\/-\/valued
zero-curvature representations for partial differential equations
$\cEinf\subseteq J^{\infty}(\pi)$
(which could offer new insights in the arising gauge cohomology
theories~\cite{Marvan2002}). 

\subsection{The classical construction of a Lie algebroid}\label{AppLieAlgd}
Let $N^m$ be a smooth real $m$-\/dimensional manifold 
($1\leq m\leq+\infty$)
and denote by $\cF=C^\infty(N^m)$ the ring of smooth functions on it.
The space $\varkappa=\Gamma(TN)$ of sections of the tangent 
bundle~$TN$ is an $\cF$-\/module. Simultaneously, the space~$\varkappa$
is endowed with the natural Lie algebra structure~$[\,,\,]$ 
which is the commutator of vector fields,
\begin{equation}\label{Commutator}
[X,Y]=X\circ Y - Y\circ X, \qquad X,Y\in\Gamma(TN).
\end{equation}
As usual, we regard the tangent bundle's 
sections as first order differential operators with zero free term.

The $\cF 
$-\/module structure of the space $\Gamma(TN)$ manifests itself for the generators of~$\varkappa$ through the Leibniz rule,
\begin{equation}\label{LeibnizVF}
[f\,X,Y]=(f\,X)\circ Y-f\cdot Y\circ X-Y(f)\cdot X,\qquad 
   f\in\cF. 
\end{equation}
The coefficient $-Y(f)$ of the vector field~$X$ in the last term 
of~\eqref{LeibnizVF} belongs again to the prescribed ring~$\cF$.

Let $\xi\colon\Omega^{m+d}\to N^m$ be another vector bundle over~$N$ and suppose that its fibres are $d$-\/dimensional. Again, the space~$\Gamma\Omega$ of sections of the bundle~$\xi$ is a module over the ring~$\cF$ 
of smooth functions on the manifold~$N^m$. 

\begin{define}[\cite{Vaintrob}]\label{DefLieAlgd}
A \textsl{Lie algebroid} over a 
manifold $N^m$ is a vector bundle $\xi\colon\Omega^{d+m}\to N^m$ 
whose space of sections $\Gamma\Omega$ 
is equipped with a Lie algebra
structure $[\,,\,]_A$ together with a bundle morphism 
$A\colon\Omega\to TN$,
called the \textsl{anchor}, such that the Leibniz rule
\begin{equation}\label{eqLeibnizInAlgd}
[f\cdot\cX,\cY]_A=f\cdot[\cX,\cY]_A
  -\bigl(A(\cY)f\bigr)\cdot\cX
\end{equation}
holds for any
$\cX,\cY\in\Gamma\Omega$ and any $f\in C^\infty(N^m)$. 
\end{define}

\begin{example}
Lie algebras are toy examples of Lie algebroids over a point. The other standard examples are the tangent bundle and the Poisson algebroid structure of the cotangent bundle to a Poisson manifold~\cite{YKSMagri}.
\end{example}

\begin{lemma}[\cite{Herz}
]\label{PropMorphismFollows}
The anchor $A$ maps the bracket $[\,,\,]_A$ for sections
of the vector bundle~$\xi$ 
to the Lie bracket $[\,,\,]$ for sections of the tangent bundle
to the manifold $N^m$.
\end{lemma}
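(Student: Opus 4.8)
The assertion is that the anchor is a homomorphism of Lie algebras, i.e.\
\[
A\bigl([\cX,\cY]_A\bigr) = \bigl[A(\cX),A(\cY)\bigr]
\]
for all $\cX,\cY\in\Gamma\Omega$, where the right-hand bracket is the commutator~\eqref{Commutator} of the vector fields $A(\cX),A(\cY)\in\Gamma(TN)$. The plan is to play the Leibniz rule~\eqref{eqLeibnizInAlgd} against the Jacobi identity for~$[\,,\,]_A$ and then to isolate the terms carrying second derivatives of an auxiliary function~$f\in\cF$; the point will be that all contributions involving only a \emph{first} derivative of~$f$ cancel, leaving a clean second-order identity.

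First I would record the right-slot counterpart of~\eqref{eqLeibnizInAlgd}: using the skew-symmetry $[\cX,\cY]_A=-[\cY,\cX]_A$ one obtains at once
\[
[\cX, f\cY]_A = f\,[\cX,\cY]_A + \bigl(A(\cX)f\bigr)\cdot\cY, \qquad f\in\cF.
\]
Next I would expand the iterated bracket $[\cX,[\cY,f\cZ]_A]_A$ in two ways. On one hand, applying the right-slot Leibniz rule twice gives
\[
[\cX,[\cY,f\cZ]_A]_A = f\,[\cX,[\cY,\cZ]_A]_A + (A(\cX)f)\,[\cY,\cZ]_A + (A(\cY)f)\,[\cX,\cZ]_A + \bigl(A(\cX)A(\cY)f\bigr)\cdot\cZ.
\]
On the other hand, the Jacobi identity rewrites the same expression as $[[\cX,\cY]_A,f\cZ]_A+[\cY,[\cX,f\cZ]_A]_A$; expanding each summand by the Leibniz rules and folding the purely $f$-weighted pieces $f[[\cX,\cY]_A,\cZ]_A+f[\cY,[\cX,\cZ]_A]_A$ back into $f[\cX,[\cY,\cZ]_A]_A$ through Jacobi yields
\[
f\,[\cX,[\cY,\cZ]_A]_A + (A(\cX)f)\,[\cY,\cZ]_A + (A(\cY)f)\,[\cX,\cZ]_A + \bigl(A([\cX,\cY]_A)f + A(\cY)A(\cX)f\bigr)\cdot\cZ.
\]

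Subtracting the two expressions makes the summand $f[\cX,[\cY,\cZ]_A]_A$ and every term linear in a first derivative $A(\cdot)f$ cancel, so that what survives is the purely second-order comparison
\[
\Bigl(A(\cX)A(\cY)f - A(\cY)A(\cX)f - A\bigl([\cX,\cY]_A\bigr)f\Bigr)\cdot\cZ = 0.
\]
Since $\cZ\in\Gamma\Omega$ is arbitrary and $\xi$ is a vector bundle, $\cZ$ may be chosen nonvanishing at any prescribed point, so the scalar factor must vanish; and since $f\in\cF$ is arbitrary while vector fields on $N^m$ are determined by their action on functions, this forces $[A(\cX),A(\cY)]=A([\cX,\cY]_A)$, which is the claim.

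I expect the only genuine obstacle to be the bookkeeping in the Jacobi route: one must apply both slots of the Leibniz rule several times and verify carefully that the contributions proportional to $(A(\cX)f)[\cY,\cZ]_A$ and $(A(\cY)f)[\cX,\cZ]_A$, as well as the term $f[\cX,[\cY,\cZ]_A]_A$, match exactly between the two computations, so that nothing but the second-order terms remains. The conceptual content is that any failure of $A$ to preserve brackets would manifest as a genuine second-order differential operator in $f$, which the combined Leibniz-and-Jacobi compatibility forbids.
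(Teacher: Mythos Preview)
Your argument is correct and follows essentially the same route as the paper's: write the Jacobi identity for $[\,,\,]_A$ applied to $\cX$, $\cY$, $f\cZ$, expand every occurrence of $f$ via the Leibniz rule~\eqref{eqLeibnizInAlgd}, observe that the first\/-\/order terms in $f$ cancel while the $f$-\/weighted pieces reassemble into the Jacobi identity for $\cX,\cY,\cZ$, and read off $\bigl(A([\cX,\cY]_A)-[A(\cX),A(\cY)]\bigr)f=0$ from the surviving second\/-\/order term. The only cosmetic difference is that the paper writes the Jacobi identity in its cyclic form and expands term by term, whereas you compute $[\cX,[\cY,f\cZ]_A]_A$ directly and then via $[[\cX,\cY]_A,f\cZ]_A+[\cY,[\cX,f\cZ]_A]_A$; the bookkeeping and the cancellations are literally the same.
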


This property is a consequence
of Leibniz rule~\eqref{eqLeibnizInAlgd} and the Jacobi identity
for the Lie algebra structure $[\,,\,]_A$ in~$\Gamma\Omega$.
Remarkably, the assertion of Lemma \ref{PropMorphismFollows} is often
\textsl{postulated} (for convenience, rather than derived) 
as a part of the definition of a Lie algebroid, \textrm{e.\,g.}, 
see~\cite{Vaintrob,Voronov2002} vs \cite{Herz,YKSMagri}. 

In the course of transition from usual manifolds $N^m$ to jet spaces
$J^{\infty}(\pi)$ it is natural that maps of spaces of sections become
nonnegative\/-\/order linear differential operators. For example, the anchors
will be operators in total derivatives
$A\in\CDiff\bigl(\Gamma(\pi^*_{\infty}(\xi)) \to
\Gamma(\pi^*_{\infty}(T\pi))\bigr)$ for spaces of sections of induced
vector bundles; note that the $\pi_{\infty}$-\/vertical component of the
tangent bundle to $J^{\infty}(\pi)$ is the target space.\footnote{We recall that
both junior and senior Hamiltonian differential operators have \emph{positive}
differential orders for all Drinfel'd\/--\/Sokolov hierarchies associated with
the root systems; this construction yields a class of variational Poisson algebroids.
The anchors which are linear operators of zero differential order are a rare exception
(however, see~\cite{TMPh2005} in this context).}
Whenever that differential order is strictly positive, one loses the
property of $A$ to be a homomorphism over the algebra $\cF(\pi) =
C^{\infty}(J^{\infty}(\pi))$ of differential functions of arbitrary
finite order. Indeed, consider the first\/-\/order anchor
$\boldsymbol{\partial}_{\alpha} = [\cdot, \alpha] + \Id_h$, which we discuss in this
paper (cf.~\cite{Marvan2002}): even though $[f\cdot p, \alpha] =
f\cdot [p, \alpha]$, the horizontal differential $\Id_h$ acts by the
Leibniz rule so that $\boldsymbol{\partial}_{\alpha} (f \cdot p) \neq
f\cdot \boldsymbol{\partial}_{\alpha}(p)$ if $f \neq \const$. We see that such map
of horizontal module of sections for a bundle $\pi^{*}_{\infty}(\xi)$
induced over $J^{\infty}(\pi)$ is not completely determined by the
images of a basis of local sections in $\xi$, which is in contrast with
the classical case in~\eqref{eqIsoMfld}.

Likewise, the Leibniz rule expressed by~\eqref{eqLeibnizInAlgd} does not
hold whenever a section $\cY\in\Gamma(\pi^{*}_{\infty}(\xi))\simeq
\Gamma(\xi)\mathbin{\otimes_{C^{\infty}(M)}}C^{\infty}(J^{\infty}(\pi))$
contains derivatives $u_\sigma$ of fibre coordinates~$u$ in~$\pi$. 
A (counter)\/example is as follows: take $\xi = T\pi$ and set $A =
\id\colon \bigl(\Gamma(\pi^{*}_{\infty}(T\pi)), [\,,\,]_A\bigr)\to
\bigl(\Gamma(\pi^{*}_{\infty}(T\pi)), [\,,\,]\bigr)$, where both Lie algebra
structures are the commutator of evolutionary vector fields. 
Let $\cX,\cY\in\Gamma\bigl(\pi^{*}_{\infty}(T\pi)\bigr)$ and $f\in
C^{\infty}(J^{\infty}(\pi))$. Then we have that
\begin{multline*}
[f\,\cX, \cY]_A = \partial^{(u)}_{f\,\cX}(\cY)
- \partial^{(u)}_{\cY}(f\cdot \cX) = f\cdot [\cX, \cY]_A -
A(\cY)(f)\cdot\cX \\
+ \sum_{|\sigma|>0}\: \sum_{\substack{\rho \cup \tau = \sigma\\
    |\rho|>0}} \frac{\Id^{|\rho|}}{\Id
  x^{\rho}}(f)\cdot\frac{\Id^{|\tau|}}{\Id x^{\tau}}(\cX)\cdot
  \frac{\partial}{\partial u_{\sigma}}(\cY).
\end{multline*}
As soon as the above two ingredients of the classical definition are
lost, we take for definition of an anchor in a \emph{variational} Lie
algebroid over $J^{\infty}(\pi)$ the involutivity $[\img A, \img A]
\subseteq \img A$ of image of a linear operator $A \in
\CDiff\bigl(\Gamma(\pi^{*}_{\infty}(\xi)), \Gamma(\pi^{*}_{\infty}(T\pi))\bigr)$
whose values belong to the space of generating sections of
evolutionary vector fields on $J^{\infty}(\pi)$ (alternatively, the
anchor could take values in a smaller Lie algebra of infinitesimal
symmetries for a given equation $\cEinf\subseteq
J^{\infty}(\pi)$). Notice that the anchor is then a Lie algebra
homomorphism by construction, namely, $A\colon
\bigl(\Gamma(\pi^{*}_{\infty}(\xi)), [\,,\,]_A\bigr) \to \bigl(\varkappa(\pi),
[\,,\,]\bigr)$. Note further that, on one hand, the bracket $[\,,\,]_A$ could be
induced on $\Gamma(\pi^{*}_{\infty}(\xi))/\ker A$ by the property
$[A(p_1),A(p_2)] = A([p_1,p_2]_A)$ of commutation closure for the image of~$A$. 
(Such is the geometry of Liouville\/-\/type Toda\/-\/like systems or the BRST-{} and BV-\/approach to gauge field models,
see~\cite{Praha2011,SymToda2009,KiselevTMPh2011} and references therein). On the other hand, the bracket $[\,,\,]_A$ can be present \textit{ab initio} in the
picture: such is the case of Hamiltonian operators~$A$ in the Poisson
formalism or the geometry of zero\/-\/curvature representations
(indeed, we then have $[\,,\,]_A = [\,,\,]_{\fg}$ for the Lie algebra~$\fg$ of a gauge group~$G$). This alternative yields four natural examples of
variational Lie algebroids.


\subsection{The odd neighbour $\Pi\xi\colon\Pi\Omega\to N^m$ and
  differential $Q^2=0$.}\label{AppOdd}
The odd neighbour of a vector bundle $\xi\colon \Omega^{m+d}\to N^m$
over a smooth real manifold $N^m$ is the vector bundle
$\Pi\xi\colon\Pi\Omega^{m+d}\to N^m$ over the same base and with
the same vector space $\BBR^d$ taken as the prototype for the fibre
over each point $x\in\mathcal{U}_\alpha\subseteq N^m$: the coordinate
diffeomorphism is $\varphi_\alpha\colon \mathcal{U}_\alpha \times
\BBR^d \to \Pi\Omega^{d+m}$. Moreover, the topology of the bundle
$\Pi\xi$ coincides with that of $\xi$ so that the gluing
transformations $g^\Pi_{\alpha\beta}\in GL(d,\BBR)$ in the fibres
over intersections $\mathcal{U}_\alpha \cap \mathcal{U}_\beta
\subseteq N^m$ of charts, smoothly depending on $x\in
\mathcal{U}_\alpha \cap \mathcal{U}_\beta$, are exactly the same as
the fibres' reparametrizations $g_{\alpha\beta}(x)$ in the bundle
$\xi$. However, notice that these \emph{linear} mapping can not feel
any grading of the object which they transform (in particular,
$g_{\alpha\beta}$ can not grasp the $\BBZ_2$-valued parity of such
$\BBR^d$); this indifference is the key element in a construction of
the odd neighbour. Namely, let the coordinates $b^1,\ldots,b^d$ along
the fibres $(\Pi\xi)^{-1}(x)\simeq \BBR^d$ be $\BBZ_2$-parity
odd,\footnote{%
The parity reversion $\Pi\colon p \leftrightarrows b$ acts on the
fibre coordinates but not on a basis $\vec{e}_i$ in $\BBR^d$. To keep
track of a distinction between the two geometries, we formally denote
by $\mathfrak{e}_i = \Pi\vec{e}_i$ the basis in $\BBR^d$ which referes
to the $\BBZ_2$-graded setup.}
i.e., introduce the $\BBZ_2$-grading $|{\cdot}|\colon x^i \mapsto \bar{0},\
b^j \mapsto \bar{1}$ for the ring of smooth $\BBR$-\/valued functions on
the total space $\Pi\Omega$ of the superbundle (the grading then acts by a
multiplicative group 
homomorphism $|{\cdot}|\colon C^{\infty}(\Pi\Omega)\to \BBZ_2$). 
We have that
$C^{\infty}(\Pi\Omega)\simeq\Gamma(\bigwedge^{\bullet}\Omega^{*})$,
where $\Omega^{*}$ denotes the space of fibrewise\/-\/linear functions 
on~$\Omega$. By construction, the new space of graded coordinate
functions on $\Pi\Omega$ is an $\BBR$-\/algebra and a
$C^{\infty}(N)$-\/module.

Notice further that the space of the bundle's sections in principle
stays intact; however, it is not the sections of $\Pi\xi$ which will be
explicitly dealt with in what follows but it is a convenient handling
of cochains and cochain maps for $\Gamma(\xi)$ by coding those objects
and structures in terms of fibrewise-homogeneous functions on~$\Pi\Omega$.

\begin{rem}
It is important to distinguish between sections
$\bp\in\Gamma(\xi)$, $\bp\colon N^m \to
\Omega^{m+d}$, and fibre coordinates $p^j$ on the total space $\Omega$
of the vector bundle $\xi$. Indeed, $\partial p^j / \partial x^i
\equiv 0$ by definition whereas the value at $x\in N^m$ of a
derivative $\tfrac{\partial}{\partial x^i}(\bp^j)(x)$ of a
section $\bp$ could be any number. In particular, consider
the Jacobi identity for the Lie algebra structure $[\,,\,]_A \colon
\Gamma(\xi)\times\Gamma(\xi) \to \Gamma(\xi)$ in a Lie algebroid. 
Let $\bp_\mu = \bp_\mu^i \vec{e}_i$ be sections of $\xi$,  
here $\mu=1$,\ $2$,\ $3$, and denote by $c^k_{ij}(x)$ the values at
$x\in N^m$ of the structure constants of $[\,,\,]_A$ with respect to a
natural basis $\vec{e}_i$ of local sections. Then we have that
\begin{multline}\label{eqJFull}
0 = \sum_{\circlearrowright} [[\bp_1, \bp_2]_A,
\bp_3]_A = \sum_{\circlearrowright}[
\bp^i_1 c^k_{ij}(x)\bp^j_2\cdot \vec{e}_k,\bp^\ell_3\cdot\vec{e}_\ell]_A\\
= \sum_{\circlearrowright}
\bp^i_1\bp^j_2\bp^\ell_3\cdot\Bigl\{c^k_{ij}(x)c^n_{kl}(x)\cdot \vec{e}_n -
 \bigl(A{\bigr|}_{x}(\vec{e}_\ell)\bigr) (c^k_{ij}(x)) \cdot \vec{e}_k 
 \Bigr\} \\ 
+ \sum_{\circlearrowright} c^k_{ij}(x)\cdot \Bigl\{\bp^i_1\bp^j_2\cdot
  \bigl(A{\bigr|}_x(\vec{e}_k)\bigr)(\bp^\ell_3)(x)\cdot\vec{e}_\ell 
 - \bp^j_2\bp^\ell_3\cdot \bigl(A{\bigr|}_x(\vec{e}_\ell)\bigr)(\bp^i_1)(x)\cdot\vec{e}_k \\
 - \bp^i_1\bp^\ell_3\cdot \bigl(A{\bigr|}_x(\vec{e}_\ell)\bigr)(\bp^j_2)(x)\cdot\vec{e}_k
 \Bigr\}.
\end{multline}
Clearly, if the coefficients $\bp^i_\mu$ are viewed as local
coordinates along fibres in $\Omega$ over $
{x}\in N^m$
parametrized by $x^1,\ldots, x^m$, then the vector fields
$A(\vec{e}_\ell)\in\Gamma(TN)$ no longer act on such $p^i_\mu$'s so that
the entire last sum in~\eqref{eqJFull} vanishes.

We refer to~\cite{Praha2011,KiselevTMPh2011} for a discussion on the immanent
presence and recovery of the 'standard,' vanishing terms in the course of
transition $C^{\infty}(\Pi\Omega)\to C^{\infty}(\Omega)\to
\Alt\bigl(\Gamma(\xi)\times\cdots\times\Gamma(\xi)\to\Gamma(\xi)\bigr)$ from homogeneous functions of the odd fibre coordinates to
$\Gamma(\xi)$-\/valued cochains (and cochain maps such as the Lie
algebroid differential $\Id_{A}$). A detailed analysis of properties
and interrelations between the four neighbours $\fg$, $\Pi\fg$,
$\fg^*$, and $\Pi \fg^*$ is performed in~\cite{Voronov2002} 
(here $m=0$, $N^m=\{\text{pt}\}$, and the Lie
algebroid~$\Omega$ is a Lie algebra~$\fg$).
\end{rem}

\begin{prop}[\cite{Vaintrob}]\label{propVariationalQ}
The Lie algebroid structure on $\Omega$ is encoded by the homological
vector field $Q$ on $\Pi\Omega$, i.e., by a 
derivation in the ring
$C^{\infty}(\Pi\Omega) = \Gamma(\bigwedge^{\bullet}\Omega^{*})$,
\[
 Q = A^{\alpha}_i(x)\, b^i \frac{\partial}{\partial x^{\alpha}} -
 \tfrac12 b^i c^k_{ij}(x)\, b^j \frac{\partial}{\partial b^k}, \qquad
 [Q,Q] =0 \quad  \Longleftrightarrow \quad 2Q^2 =0,
\]
where
\begin{itemize}
\item
$(x^\alpha)$ is a system of local coordinates near a point $x\in N^m$,
\item
$(p^i)$ are local coordinates along the $d$-\/dimensional fibres of $\Omega$
and $(b^i)$ are the respective coordinates on $\Pi\Omega$, and
\item
the formula $[\vec{e}_i,\vec{e}_j]_A=c^k_{ij}(x)\,\vec{e}_k$ gives
the structure constants for a $d$-\/element
local basis $(\vec{e}_i)$ of sections
in $\Gamma\Omega$ over the point~$x$, and 
$A(\vec{e}_i)=A_i^\alpha(x)\cdot\dd/\dd x^\alpha$ is the image of $\vec{e}_i$
under the anchor~$A$. 
\end{itemize}
\end{prop}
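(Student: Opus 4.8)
The plan is to use the odd parity of $Q$. Write $Q=Q_1+Q_2$ with anchor part $Q_1=A^\alpha_i b^i\,\dd/\dd x^\alpha$ and bracket part $Q_2=-\tfrac12 c^k_{ij}b^ib^j\,\dd/\dd b^k$. Since $Q$ is a degree $(+1)$ derivation of $C^\infty(\Pi\Omega)$, its square equals half of the graded self\/-\/commutator, $Q^2=\tfrac12[Q,Q]$, and this commutator is again a derivation (a vector field), not a second\/-\/order operator; thus $[Q,Q]=0$ is literally the vanishing of a vector field. First I would verify that the second\/-\/order contributions cancel: the $\dd_x\dd_x$\/-\/terms drop out because $b^ib^j$ is skew while $\dd_{x^\alpha}\dd_{x^\beta}$ is symmetric, and the mixed $\dd_x\dd_b$\/-\/ and $\dd_b\dd_b$\/-\/terms cancel between the two orderings $Q_1Q_2$ and $Q_2Q_1$. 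It then suffices to compute $Q^2$ in the coordinates $(x^\alpha,b^i)$ and to set to zero, separately, the coefficient of $\dd/\dd x^\beta$ (quadratic in the $b$'s) and the coefficient of $\dd/\dd b^k$ (cubic in the $b$'s).

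Next I would isolate the two surviving pieces. Writing $\rho_i=A^\alpha_i\,\dd/\dd x^\alpha$ for the images of the basis sections, the coefficient of $\dd/\dd x^\beta$ collects the contributions of $Q_1^2$ and of $Q_2Q_1$; after using the skew\/-\/symmetry of $b^ib^j$ it is proportional to $A^\alpha_i\,\dd_{x^\alpha}A^\beta_j-A^\alpha_j\,\dd_{x^\alpha}A^\beta_i-c^k_{ij}A^\beta_k$, whose vanishing is precisely $[\rho_i,\rho_j]=c^k_{ij}\rho_k$, i.e.\ that the anchor is a homomorphism into $(\Gamma(TN),[\,,\,])$, which is the content of Lemma~\ref{PropMorphismFollows}. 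The coefficient of $\dd/\dd b^k$ collects the term in which $\dd_{x^\alpha}$ from $Q_1$ differentiates the base\/-\/dependent structure functions $c^k_{ij}(x)$ (the relevant part of $Q_1Q_2$) together with the purely algebraic $Q_2^2$\/-\/contribution; after cyclic antisymmetrisation over the three odd factors it reproduces the Lie algebroid Jacobi identity $\sum_{\circlearrowright}\bigl(\rho_i(c^n_{jl})-c^m_{ij}c^n_{ml}\bigr)=0$ for $[\vec e_i,\vec e_j]_A=c^k_{ij}\vec e_k$ extended by the Leibniz rule~\eqref{eqLeibnizInAlgd}. Reading the same identities backwards yields the converse, so $Q^2=0$ is equivalent to the two Lie algebroid axioms.

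The main obstacle, and the point most worth stating with care, is the bookkeeping of parity signs together with the conceptual distinction emphasised in the Remark preceding the statement: the $b^i$ are fibre \emph{coordinates} on $\Pi\Omega$, so $\dd b^i/\dd x^\alpha\equiv 0$ and the anchor fields $\rho_i$ do not act on them. This is exactly why the last, anchor\/-\/derivative sum in the Jacobiator~\eqref{eqJFull} is absent once the structure is coded by homogeneous functions on $\Pi\Omega$, so that $Q^2=0$ splits cleanly into the homomorphism condition and the Jacobi identity with no residual Leibniz\/-\/type terms. I would therefore carry out the entire computation in the odd coordinates $b^i$, invoke the dictionary $C^\infty(\Pi\Omega)\simeq\Gamma(\bigwedge^\bullet\Omega^*)$ under which $Q$ is the Chevalley\/--\/Eilenberg\/-\/type Lie algebroid differential $\Id_A$, and only at the very end translate back to the section\/-\/wise formulation, where the vanishing terms of~\eqref{eqJFull} are recovered as explained in~\cite{Praha2011,KiselevTMPh2011}.
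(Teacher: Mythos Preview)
Your proposal is correct and follows essentially the same approach as the paper's sketch: split $Q^2$ into its $\dd/\dd x^\beta$ and $\dd/\dd b^k$ components and identify them with the anchor homomorphism condition (Lemma~\ref{PropMorphismFollows}) and the Jacobiator~\eqref{eqJFull}, respectively. The only stylistic difference is that the paper's sketch invokes the parallel with the proof of Theorem~\ref{propNonAbelianQ} --- interpreting the homogeneous coefficients as multilinear skew\/-\/symmetric maps evaluated at sections $p_1,p_2,p_3\in\Gamma(\xi)$ --- whereas you carry out the computation directly in the odd coordinates~$b^i$ and only translate back to sections at the end; your version is more explicit and self\/-\/contained, while the paper's is terser and relies on the earlier argument.
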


\begin{proof}[Sketch of the proof]
The reasoning goes in parallel with the proof of
Theorem~\ref{propNonAbelianQ}. First, we recall that the anchor $A
= \|A^\alpha_i\|^{1\leqslant\alpha\leqslant m}_{1\leqslant i \leqslant
  d}$ is the Lie algebra homomorphism by
Lemma~\ref{PropMorphismFollows}. Second, we note that the homogeneous
(in odd-parity coordinates $b^j$) coefficients of $\partial / \partial
b^k$, $1\leqslant k \leqslant d$, in $Q^2$ encode the tri-linear,
totally skew-symmetric map $\omega_3\colon
\Gamma(\xi)\times\Gamma(\xi)\times\Gamma(\xi) \to \Gamma(\xi)$ whose value at any
$p_1$,\ $p_2$,\ $p_3 \in \Gamma(\xi)$ is twice the right-hand side of
Jacobi's identity~\eqref{eqJFull}. Here we use the fact that cyclic
permutations of three objects are even (in terms of permutation's
$\BBZ_2$-parity), whence it is legitimate to extend the summation
$\sum_{\circlearrowright}$ to a sum over the entire permutation group
$S_3$:
\[
\sum_\circlearrowright \omega_3(p_{\sigma(1)}, p_{\sigma(2)},
p_{\sigma(3)}) = \frac12 \sum_{\sigma \in S_3} (-)^{\sigma} \omega_3(p_{\sigma(1)}, p_{\sigma(2)},
p_{\sigma(3)}).
\]
The presence of zero section in the left-hand side of Jacobi
identity~\eqref{eqJFull} implies that the respective coefficient of
$\partial/\partial {\boldsymbol{b}}$ in $Q$ vanishes.\footnote{%
Notice that the second step of this reasoning is simplified further in
the case of non\/-\/Abelian variational Lie algebroids (see
p.~\pageref{propNonAbelianQ}) because in that case the bracket $[\,,\,]_A$ is a
given Lie algebra structure in~$\fg$; it is described globally by
using the structure constants $c^k_{ij}$ regardless of the base
manifold.}
\end{proof}

\begin{rem}
The coefficient $+\tfrac12$ in the homological evolutionary vector
field~$Q$ in Theorem~\ref{propNonAbelianQ}, but not the opposite
value $-\tfrac12$ in the canonical formula (see
Proposition~\ref{propVariationalQ} above) is due to our choice of sign
in a notation for the zero\/-\/curvature representation $\alpha = A_i\cdot
\Id x^i$: one sets either $\Psi_{x^i} + A_i\Psi = 0$ or $\Psi_{x^i} =
A_i\Psi$ for the wave function $\Psi$. The second option is adopted by
repetition but it tells us that the gauge connection's $\fg$-\/valued 
one\/-\/form is minus~$\alpha$.
\end{rem}

\begin{rem}
The correspondence $f_k \leftrightarrow \omega_k$ between homogeneous
functions $f_k(x;\boldsymbol{b},\ldots, \boldsymbol{b})\in C^{\infty}(\Pi\Omega)$ on the total
space of the superbundle $\Pi\xi$ and $k$-chain maps $\omega_k \colon
\Gamma(\xi)\times\cdots\times\Gamma(\xi) \to C^{\infty}(N)$
correlates 
the homological vector field $Q$ with the Lie algebroid differential $\Id_A$ that acts by the standard Cartan formula. Namely, the following diagram is commutative,
\[
\begin{CD}
\Id_{A} \colon @. \omega_k @>>> \omega_{k+1}\\
              @.  @VVV          @VVV \\
Q \colon      @.  f_k     @>>>  f_{k+1}.
\end{CD}
\]
The wedge product of $k$- and $\ell$-chains corresponds under the
vertical arrows of this diagram to the ordinary $\BBZ_2$-graded
multiplication of the respective functions from~$C^{\infty}(\Pi\Omega)$.

The main examples of this construction are the de Rham differential on a
manifold $N^m$  (as before, set $\xi:=\pi$ and let $A = \id$), the
Chevalley\/--\/Eilenberg differential for a Lie algebra~$\fg$ (let $m=0$,
$N^m= \{\text{pt}\}$, and take $(\Omega, [\,,\,]_A) = (\fg, [\,,\,]_{\fg})$
and $A = 0$), and the de~Rham differential on a symplectic manifold
(here $\xi\colon \Lambda^1(N^m)\to N^m$, $A= \lshad P, \cdot
\rshad$ is the Poisson differential given by a bi-vector $P$
satisfying $\lshad P, P \rshad = 0$ and having the inverse
symplectic two-form $P^{-1}$, and $[\,,\,]_A$ is the
Koszul\/--\/Dorfman\/--\/Daletsky\/--\/Karas\"{e}v bracket~\cite{Dorfman,YKSMagri}).

The Hamiltonian homological evolutionary vector field~$Q$ that encodes the
\emph{variational} Poisson algebroid structure over a jet space
$J^{\infty}(\pi)$ was de facto written
in~\cite{JKGolovkoVerb2008TMPh}. The BRST-\/differential~$\boldsymbol{Q}$ is another example of such construction over jet
spaces $J^{\infty}(\pi)\supseteq \cEinf$ containing the Euler\/--\/Lagrange equations for gauge\/-\/invariant models.
\end{rem}

\subsection*{Acknowledgements}
The authors are grateful to the anonymous referee for helpful suggestions and remarks.
The authors thank the organizing committee of the conference
`Nonlinear Mathematical Physics: Twenty Years of~JNMP' (Nordfj{\o}rdeid,
Norway, 2013) for partial support. The work of A.V.K.\ was supported in part by
JBI~RUG project~103511 (Groningen);
A.O.K.\ was supported by ISPU scholarship for young scientists.
A part of this research was done while A.V.K.\ was visiting 
at the $\smash{\text{IH\'ES}}$ (Bures\/-\/sur\/-\/Yvette); 
the financial support and hospitality of this institution are gratefully acknowledged.

\end{document}